\newtheorem{theorem}{Theorem}[section]
\newtheorem{lemma}[theorem]{Lemma}
\newtheorem*{mainthm}{Main Theorem}
\theoremstyle{definition}
\newtheorem{definition}[theorem]{Definition}
\theoremstyle{remark}
\newtheorem{remark}[theorem]{Remark}
\numberwithin{equation}{section}
\DeclareMathOperator{\Lip}{Lip}
\DeclareMathOperator{\dist}{dist}
\DeclareMathOperator{\mix}{mix}
\def\Xint#1{\mathchoice
	{\XXint\displaystyle\textstyle{#1}}%
	{\XXint\textstyle\scriptstyle{#1}}%
	{\XXint\scriptstyle\scriptscriptstyle{#1}}%
	{\XXint\scriptscriptstyle\scriptscriptstyle{#1}}%
	\!\int}
\def\XXint#1#2#3{{\setbox0=\hbox{$#1{#2#3}{\int}$ }
		\vcenter{\hbox{$#2#3$ }}\kern-.6\wd0}}
\def\dashint{\Xint-}
\begin{document}

\title{Cellular Mixing with bounded Palenstrophy}

\author{Gianluca Crippa}
\address{Gianluca Crippa\\
	Department Mathematik und Informatik, Universität Basel}
\curraddr{Spiegelgasse 1, CH-4051 Basel, Switzerland}
\email{gianluca.crippa@unibas.ch}
\thanks{}

\author{Christian Schulze}
\address{Christian Schulze\\
	Department Mathematik und Informatik, Universität Basel}
\curraddr{Spiegelgasse 1, CH-4051 Basel, Switzerland}
\email{christian.schulze@unibas.ch}


\keywords{Mixing, continuity equation, negative Sobolev norms, incompressible flows, regular Lagrangian flows, palenstrophy, cellular mixing}

\begin{abstract}
	We study the problem of optimal mixing of a passive scalar $\rho$ advected by an incompressible flow on the two dimensional unit square. The scalar $\rho$ solves the continuity equation with a divergence-free velocity field $u$ with uniform-in-time bounds on the homogeneous Sobolev semi-norm $\dot{W}^{s,p}$, where $s>1$ and $1< p \leq \infty$. We measure the \textit{degree of mixedness} of the tracer~$\rho$ via the two different notions of mixing scale commonly used in this setting, namely the \textit{functional } and the \textit{geometric }mixing scale. For velocity fields with the above constraint, it is known that the decay of both mixing scales cannot be faster than \textit{exponential}. Numerical simulations suggest that this exponential lower bound is in fact sharp, but so far there is no explicit analytical example which matches this result. We analyze velocity fields of $\textit{cellular type}$, which is a special localized structure often used in constructions of explicit analytical examples of mixing flows and can be viewed as a generalization of the self-similar construction by Alberti, Crippa and Mazzucato \cite{AlbCrippa}. We show that for any velocity field of cellular type both mixing scales cannot decay faster than \textit{polynomially}.

\end{abstract}

\maketitle
\tableofcontents

	\section{Introduction}

	We consider a passive scalar $\rho$ advected by a time-dependent, divergence-free velocity field $u$ on the two dimensional open unit square $\mathcal{Q}=(-\frac{1}{2},\frac{1}{2})^2$, with $u=0$ on $\partial \mathcal{Q}$.
	Given an initial condition $\bar{\rho}$, the scalar $\rho$ satisfies the Cauchy problem for the continuity equation with velocity field $u$:
	\begin{equation}
		\label{Cauchyprob}
		\begin{cases} \partial_t\rho + \textnormal{div} (u\rho) =0 & \mbox{on } \mathbb{R}_{+}\times\mathbb{R}^2\\ \rho(0,\cdot)=\bar{\rho} & \mbox{on }\mathbb{R}^2.\end{cases}
	\end{equation}
	We assume that $\bar{\rho}$ satisfies $\int_{\mathcal{Q}}\bar{\rho}=0$, thus the condition will be satisfied by the solution $\rho(t,\cdot)$ for all times $t$. Outside of $\mathcal{Q}$, both the velocity field $u$ and the solution $\rho$ are identically zero for all times. The results presented in this paper hold in all dimensions $d\geq 2$ as well with the necessary changes in the scaling analysis. For the construction of examples, $d=2$ is the most restrictive case. \\\\
	We study the problem of \textit{optimal mixing} in this setting. The first issue one has to address is how to quantify the \textit{degree of mixedness} of the tracer. There are mainly two notions of mixing scale which are commonly used in this setting: the \textit{functional mixing scale}, which measures the semi-norm of the tracer $\rho$ in the homogeneous Sobolev space $\dot{H}^{-1}$ (see \cite{Multi}, \cite{LinThif} and \cite{AlbCrippa} for the definition of homogeneous Sobolev spaces), and the \textit{geometric mixing scale} $\mathcal{G}$, first introduced by A. Bressan for \textit{binary} solutions $\rho\in\lbrace-1,+1\rbrace$ (see \cite{Bressan}) in connection with a conjecture on the cost of rearrangements of sets. This second scale determines the smallest possible radius $r$ such that in each ball of radius $r$ the proportion of both level sets is comparable. We will use a canonical generalization of this scale for any bounded solution, not necessarily binary (see \cite{AlbCrippa}).  Although these two notions of mixing scales are somewhat related, one can verify with elementary examples (see \cite{Lin}) that they are not equivalent.\\\\
Studying mixing mechanisms of a quantity transported by a flow is complex and of interest in a wide range of applications such as food processing, oceanography and atmospheric science. Furthermore, the efficiency of many chemical reactions is heavily influenced by the degree of mixedness of the reactants (for example air and fuel in a combustion motor).  \\\\
	The theory of \textit{optimal mixing} addresses the following question. Let an initial condition $\bar{\rho}$, a mixing scale $\mix(\cdot)$ (geometric or functional, for instance) and a physical constraint of interest on the velocity field $u$ (for example a uniform-in-time bound on either the \textit{kinetic energy}, the \textit{enstrophy} or the \textit{palenstrophy}) be given. What is the fastest possible decay of the function $\mix(\rho(t,\cdot))$ achievable, and can we construct explicit examples which achieve the maximal rate of decay (such velocity fields would be called \textit{optimal mixers})?
	\\\\
	To summarize one particular result of this type, let us consider a uniform-in-time bound on the~$\dot{H}^1$ norm of the velocity field $u$ (a so called \textit{enstrophy} constraint) or more generally, a uniform-in-time bound on the $\dot{W}^{1,p}$ norm, where $1<p\leq\infty$. Crippa and De Lellis \cite{LellisCrippa} showed that under this constraint the decay of the geometric mixing scale $\mathcal{G}$ cannot be faster than \textit{exponential}, i.e.
	\begin{equation}
	\label{cnn}
		\mathcal{G}(\rho(t,\cdot))\geq C e^{-ct}
	\end{equation}
	where $C>0$ and $c>0$ are constants depending on the initial datum $\bar{\rho}$ and on the given bounds on the velocity field (see also L\'eger \cite{leger}).  Iyer, Kiselev and Xu \cite{Kiselev}, as well as Seis \cite{Seis}, later showed that an \textit{exponential} lower bound for the \textit{functional} mixing scale holds as well, i.e.
	\begin{equation}
	\label{norreaga}
		\|\rho(t,\cdot)\|_{\dot{H}^{-1}}\geq C e^{-ct}.
	\end{equation}
	These lower bounds turn out to be \textit{optimal}. For any $p<\frac{3+\sqrt{5}}{2}$,  Yao and Zlato\v{s} \cite{Yao} constructed explicit examples of solutions with a uniform in time bound on $\|\nabla u(t,\cdot)\|_{p}$ and a solution $\rho$ whose mixing scales decay at an exponential rate. Using an ansatz of self-similarity, Alberti, Crippa and Mazzucato \cite{AlbCrippaCras}, \cite{AlbCrippa} later constructed for all $1\leq p\leq\infty$ examples with a uniform in time bound on $\|\nabla u(t,\cdot)\|_{p}$ and a solution $\rho$, whose mixing scales decay at an exponential rate.\\\\
	We address the case where $u$ has a uniform-in-time bound on the $\dot{H}^2$ norm (fixed \textit{palenstrophy} constraint), or more generally, a uniform-in-time bound on the $\dot{W}^{s,p}$ norm, where $s>1$ and $1< p\leq\infty$ (to which we will also refer with a slight abuse of terminology as a ``fixed palenstrophy'' constraint). In this case, there is currently no proof of a decay estimate which is also optimal, in contrast with the case described above.\\\\
 Combining the exponential lower bounds under fixed enstrophy \eqref{cnn} and \eqref{norreaga} with the (fractional) Poincar\'e inequality 
	\begin{equation*}
		\|u(t,\cdot)\|_{\dot{W}^{s,p}(\mathbb{R}^2)}\geq C_{s,p} \|u(t,\cdot)\|_{\dot{W}^{1,p}(\mathbb{R}^2)}
	\end{equation*}
	which holds for all velocity fields $u$ as in our setting, we immediately inherit an exponential lower bound under fixed palenstrophy as well. Whether or not this lower bound is sharp, however, is not entirely clear. At the present time, the fastest decay reached by an explicit (analytical) example under fixed palenstrophy is (only) \textit{polynomial}:
	\begin{equation}
	\label{generaleasy}
		\|\rho(t,\cdot)\|_{\dot{H}^{-1}}\simeq Ct^{-\frac{1}{s-1}}\, .
	\end{equation}
	For this construction, see \cite{AlbCrippaCras}, \cite{AlbCrippa} (quasi self-similar mixing). \\\\
	However, Lunasin \textit{et al.} \cite{Lin} exhibited a \textit{numerical} example of a velocity field which appears to mix at an exponential rate under fixed palenstrophy, which suggests that the exponential lower bound is indeed sharp also under this constraint. This triggers the search for analytical examples matching this result. 
	\\\\
	Constructing explicit analytical examples of mixing flows is a  difficult task, and it is also a fairly recent trend. Until now there are only a handful of examples available, whatever the constraint on the velocity field (\cite{Bressan},\cite{Depauw},\cite{LinThif},\cite{Lin},\cite{Yao},\cite{AlbCrippa}). Most of those examples are using the same basic structure for the construction, a basic structure we will later formalize and refer to as \textit{``of cellular type''}. Roughly speaking, the idea is the following.\\\\
	We start with an initial condition $\bar{\rho}$ with zero average on the $2$-dimensional square $\mathcal{Q}$. In a first step we subdivide $\mathcal{Q}$ into $4$ disjoint sub-squares $D_1,\ldots,D_4$ of equal size. We now try to construct a velocity field $u_0$ in such a way that the solution $\rho(1,\cdot)$ has zero average on each of the sub-squares, meaning that the tracer gets equally distributed among $D_1,\ldots,D_4$, as schematically visualized in~Figure \ref{faschtfertig}. 
		\begin{figure}[h]
			\label{faschtfertig}
			\begin{center}
				\scalebox{0.35}{\input{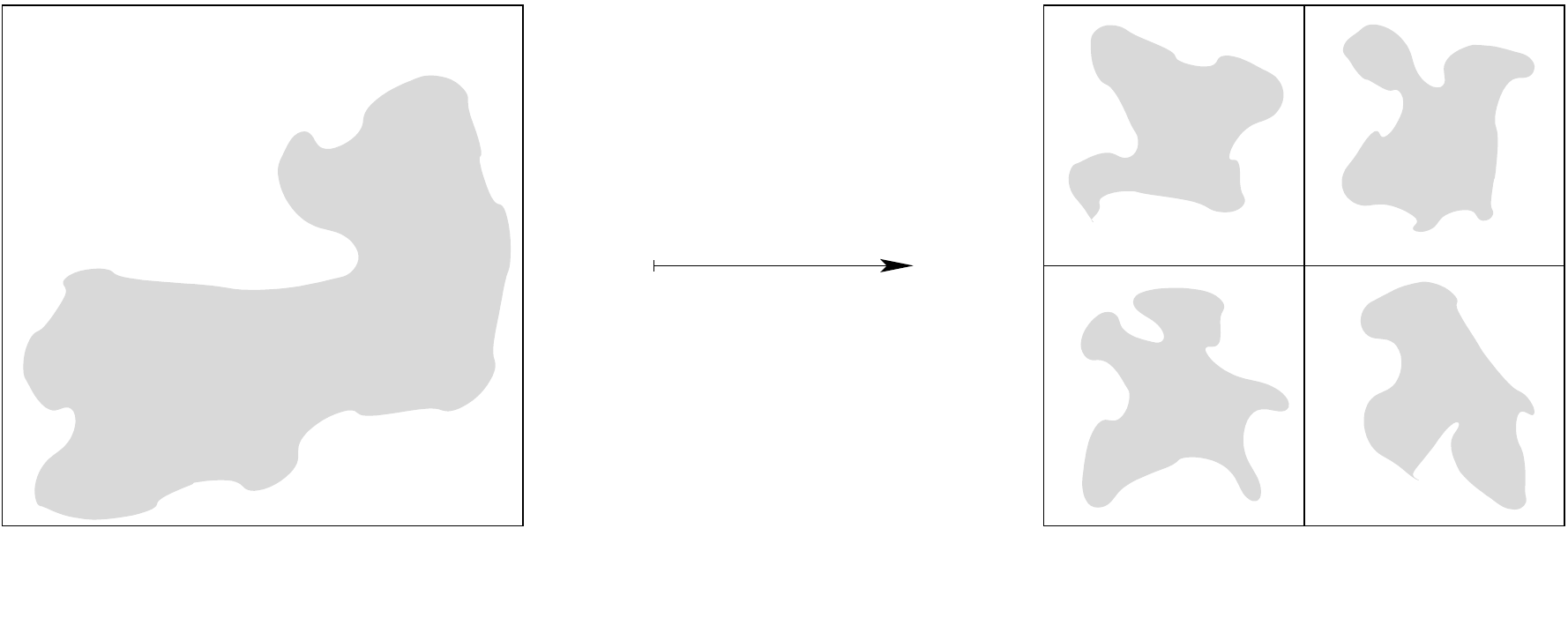tex_t}}
			\end{center}
				\caption{Example of a cellular basic building block}
		\end{figure}
		
	We continue inductively by subdividing each $D_i$ into four equal sub-squares and distributing the tracer equally among those sub-squares with \textit{tracer movements localized in $D_i$} respectively, and so forth. In a sense, in each step the problem of mixing on a large area is transferred into the problem of \textit{separately} mixing on four smaller areas. \\\\
	In our definition of a flow of cellular type (see Definition \ref{celltype}) we make one further assumption. For reasons explained in more detail in Remark \ref{platel}, we add a condition which keeps a mixer from mixing \emph{too well} before entering the next iteration. Without such a condition, one would be allowed to mix arbitrarily well on unit scale before restricting the tracer movements on smaller scales. We note that the examples mentioned above fulfill this additional condition as well. \\\\
	Using a cellular type structure for the construction of mixing flows holds many benefits. In addition to the convenient inductive nature of the construction, a crucial benefit of this structure is that it allows to keep track of both the geometric and the functional mixing scale at all times (see~Lemma \ref{tilingmixing}). Also, we note that under a fixed enstrophy constraint, the fastest possible (exponential) decay was reached by examples with a velocity field of cellular type (\cite{Yao}, \cite{AlbCrippa}). This means that at least under this constraint, the ansatz of a cellular structure does not a priori slow down the mixing process in a significant way. \\\\
	Our main objective in this paper is to show that, in the case of a palenstrophy contraint, this ansatz actually has the effect to slow down the mixing to a polynomial rate. In detail: \\\\ 
	\textbf{Main Result.} With a velocity field of cellular type no decay faster than polynomial can be achieved under a fixed palenstrophy constrain on the velocity field. More precisely, for any evolution of cellular type with a velocity field $u\in L_t^\infty(\dot{W}^{s,p})$, where $s>1$, the corresponding solution $\rho$ satisfies
		\begin{equation}
		\mathcal{G}(\rho(T_n,\cdot))\geq C T_n^{-\frac{1}{s-1}}\hspace{0.5cm}\text{ and }\hspace{0.5cm}\|\rho(T_n,\cdot)\|_{\dot{H}^{-1}(\mathbb{R}^2)}\geq CT_n^{-\frac{2}{s-1}}\, 
		\end{equation}
		for time steps $T_n$ that are specified later.\\\\
		Going back to the example mentioned before \eqref{generaleasy}, our result explains the polynomial decay is not due to the specific ``geometrically rigid'' assumption of self-similarity, but rather to a more general constraint of increased localization of the particles.\\\\
The main strategy for the construction of the velocity field in the numerical simulation of Lunasin~\textit{et~al.}~\cite{Lin} is to instantaneously maximize the depletion of the $H^{-1}$ norm. The resulting flow appears to do large scale movements throughout the simulation and nowhere the flow appears to be localized (this can be seen in Figure 4 in \cite{Lin}). The simulation is therefore very compatible with our result, since one of the defining characteristics of any velocity field of cellular type is the increased localization of its flow. Both results therefore suggest that in order to construct a velocity field which mixes at an exponential rate under fixed palenstrophy one needs to take a velocity field which moves the passive scalar at unit scale at all times, for instance making out of the numerical example in~\cite{Lin} an analytical example. This appears to be a very challenging task. An analytical example of a velocity field which is not of cellular type and mixes a large class of initial data (although with a polynomial rate) is presented in \cite{EulerMix}. The velocity field in such example is in fact time-independent and for all times it induces a large scale particle motion. \\\\
As a side result we also show that velocity fields of cellular type cannot be \emph{universal mixers}, meaning that a fixed velocity field of cellular type cannot mix \emph{every} initial condition (whatever the rate of mixing).\\\\
	The rest of the paper is organized as follows. In Chapter 2 we state the definitions of the two mixing scales. Before stating the precise definition of a cellular type structure in Chapter \ref{cellmix}, we briefly summarize the key ideas of the construction based on an ansatz of self-similarity \cite{AlbCrippa} in Chapter \ref{selfsimaiu}. In Chapter \ref{mainresuaiu} we state the Main Theorem and describe the strategy for its proof. The proof consists of two main steps: a minimal cost estimate which we show in Chapter \ref{MinCostEachStep}, and a scaling argument which we implement in Chapter \ref{ScalingAnalysis}. A few lemmas for the previous chapters are proved in Appendix A and B. In Appendix \ref{brent} we show that velocity fields of cellular type cannot be universal mixers.   \\\\
	\textbf{Acknowledgements.} This work was partially supported by the ERC Starting Grant 676675 FLIRT. We thank one of the anonymous referees for several detailed remarks that led to a shortening of the proof of Theorem~\ref{yolinski} and as a consequence to a simplification of the notion of characteristic length scale in Definition~\ref{zulu}.
 
	\section{Definitions of mixing scales and tiling} 
	\label{defmixtile}
	We begin by introducing the two notions of mixing scales for solutions of \eqref{Cauchyprob} on the two-dimensional square $\mathcal{Q}$. The first definition is a canonical adaption of the geometric mixing scale of~\cite{Bressan}, see Definition 2.11 in \cite{AlbCrippa}:
	\begin{definition}[Geometric Mixing Scale]
		\label{dpgc}
		 Given an accuracy parameter $0<\kappa<1$, the geometric mixing scale of $\rho(t,\cdot)$ is the infimum $\epsilon(t)$ of all $\epsilon>0$ such that for every $x\in \mathbb{R}^2$ there holds 
		\begin{equation}
			\label{AggroBerlin}
			\frac{1}{\|\rho(t,\cdot)\|_{\infty}}\left|\,\dashint_{B_\epsilon(x)}\rho(t,y)\,dy\right|\leq \kappa\, .
		\end{equation}
We denote by $\mathcal{G}(\rho(t,\cdot))$ such infimum $\epsilon(t)$. The $\kappa$ in the definition plays a minor role and is fixed for the rest of this paper.
	\end{definition}
	\noindent
	The second mixing scale is the \textit{functional}	mixing scale:
	\begin{definition}[Functional Mixing Scale]
		The functional mixing scale of $\rho(t,\cdot)$ is $\|\rho(t,\cdot)\|_{\dot{H}^{-1}(\mathbb{R}^2)}$.
	\end{definition}
	\noindent
	Next, we formalize the notion of subdividing $\mathcal{Q}$ into a family of disjoint sub-squares:
	\begin{definition}[Tiling on $\mathcal{Q}$] Let a tiling parameter $\lambda>0$, such that $\lambda^{-1}$ is an integer greater or equal two, be given. We denote by $\mathcal{T}_\lambda$ the \textit{tiling} of $\mathcal{Q}$ with squares of side $\lambda$, consisting of the $1/\lambda^2$ open squares in $\mathcal{Q}$ of the form 
		\begin{equation*}
			\left\lbrace (x,y)\in \mathcal{Q}\, :\, -\frac{1}{2}+k\lambda<x<-\frac{1}{2}+(k+1)\lambda \textnormal{ and }-\frac{1}{2}+h\lambda<y<-\frac{1}{2}+(h+1)\lambda\right \rbrace
		\end{equation*}
		for $k,h=0,\ldots,1/\lambda -1$.
			\begin{figure}[h]
				\begin{center}
					\scalebox{.4}{\input{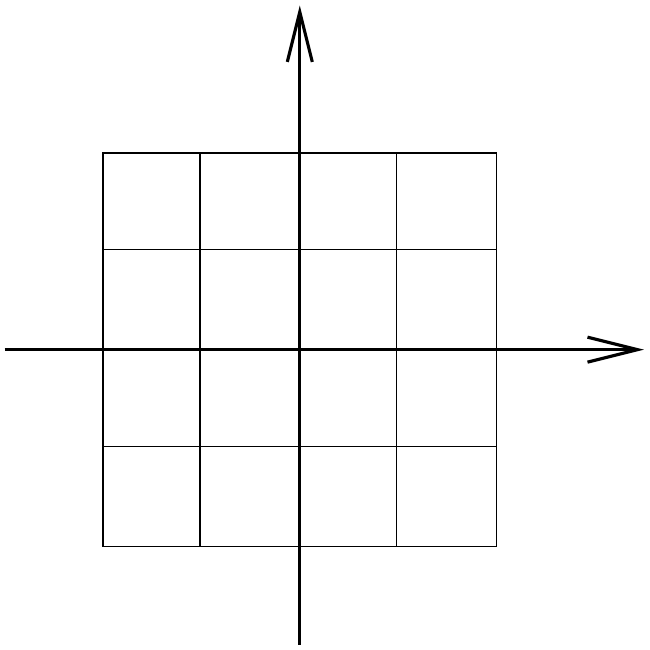tex_t}}
				\end{center}
				\caption{Tiling with tiling parameter $\lambda$}
			\end{figure}
	\end{definition}
	The next lemma states that if the tracer $\rho$ is equally distributed  among all sub-squares $Q\in\mathcal{T}_{\lambda}$, then both the geometric and the functional mixing scales of $\rho$ are of order $\lambda$ or better:
	\begin{lemma}
		\label{tilingmixing}
		Let $\rho$ be a bounded function such that 
		\begin{equation}
			\label{dunix}
			\dashint_Q \rho\,dy=0
		\end{equation}
		for every tile $Q\in \mathcal{T}_{\lambda}$. Then there exist constants $C_1=C_1(\kappa)$ and $C_2=C_2(\|\rho\|_\infty)$ such that
		\begin{itemize}
			\item[(i)]
			
			$\mathcal{G}(\rho)\leq C_1\lambda$,
			
			\item[(ii)]
			$\|\rho\|_{\dot{H}^{-1}}\leq C_2 \lambda$.
		\end{itemize}
	\end{lemma}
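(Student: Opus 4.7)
\textbf{Strategy for (i).} The plan is to bound the average of $\rho$ on any ball $B_\epsilon(x)$ by partitioning the ball into tiles of $\mathcal{T}_\lambda$ that lie entirely in $B_\epsilon(x)$ and tiles that meet the boundary $\partial B_\epsilon(x)$. By the zero-average hypothesis \eqref{dunix}, the fully contained tiles contribute $0$ to $\int_{B_\epsilon(x)} \rho$. The tiles that cross $\partial B_\epsilon(x)$ are all contained in an annular neighborhood of $\partial B_\epsilon(x)$ of width at most $\lambda\sqrt{2}$, so their total area is bounded by $C\epsilon\lambda$. Tiles outside $\mathcal{Q}$ contribute nothing since $\rho\equiv 0$ there. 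Estimating the remaining integral trivially by $\|\rho\|_\infty$, one obtains
\begin{equation*}
\left|\,\dashint_{B_\epsilon(x)}\rho\,dy\right| \leq \frac{C\|\rho\|_\infty \epsilon\lambda}{\pi\epsilon^2} = \frac{C\|\rho\|_\infty\lambda}{\epsilon}.
\end{equation*}
Requiring the right-hand side to be at most $\kappa\|\rho\|_\infty$ forces $\epsilon \geq C\lambda/\kappa$, which yields $\mathcal{G}(\rho)\leq C_1(\kappa)\lambda$.

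\textbf{Strategy for (ii).} I would use the dual characterization
\begin{equation*}
\|\rho\|_{\dot{H}^{-1}(\mathbb{R}^2)} = \sup\left\{\int_{\mathbb{R}^2}\rho\,\phi\,dx \,:\, \phi\in C_c^\infty(\mathbb{R}^2),\ \|\nabla\phi\|_{L^2}=1\right\}.
\end{equation*}
For a fixed test function $\phi$, split the integral over the tiling and, crucially, subtract the mean $\bar\phi_Q = \dashint_Q \phi$ inside each tile, which is allowed by \eqref{dunix}:
\begin{equation*}
\int_{\mathbb{R}^2}\rho\,\phi\,dx = \sum_{Q\in\mathcal{T}_\lambda}\int_Q \rho(\phi-\bar\phi_Q)\,dx.
\end{equation*}
On each tile I would apply Cauchy--Schwarz together with the scaled Poincaré inequality $\|\phi-\bar\phi_Q\|_{L^2(Q)} \leq C_P\lambda\,\|\nabla\phi\|_{L^2(Q)}$, obtaining
\begin{equation*}
\left|\int_Q \rho(\phi-\bar\phi_Q)\,dx\right| \leq \|\rho\|_\infty\lambda\cdot C_P\lambda\,\|\nabla\phi\|_{L^2(Q)}.
\end{equation*}
Summing over the $1/\lambda^2$ tiles and applying Cauchy--Schwarz once more in the counting measure (so that $\sum_Q 1\cdot\|\nabla\phi\|_{L^2(Q)} \leq \lambda^{-1}\|\nabla\phi\|_{L^2(\mathbb{R}^2)}$) produces the desired bound
\begin{equation*}
\left|\int\rho\,\phi\,dx\right| \leq C_P\|\rho\|_\infty\,\lambda\,\|\nabla\phi\|_{L^2},
\end{equation*}
and taking the supremum yields $\|\rho\|_{\dot{H}^{-1}}\leq C_2(\|\rho\|_\infty)\lambda$.

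\textbf{Main concern.} Neither step involves a genuine obstacle; the argument is essentially a bookkeeping exercise once the zero-average condition \eqref{dunix} is used to introduce the tile means. The only points requiring a little care are the counting of boundary tiles for (i) (which must use both that the ball's boundary has length $O(\epsilon)$ and that tiles outside $\mathcal{Q}$ carry no mass) and the double use of Cauchy--Schwarz in (ii), where summing $\lambda^2$-factors across $\lambda^{-2}$ tiles has to be tracked to get a clean factor of $\lambda$ (rather than $\lambda^2$) in the final bound.
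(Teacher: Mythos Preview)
Your proposal is correct and follows essentially the same route as the paper. For (i) the paper simply cites Lemma~3.5 of \cite{AlbCrippa}, whose argument is precisely the boundary-tile count you describe; for (ii) the paper also uses duality and subtracts the tile mean, the only cosmetic difference being that it applies the $L^1$ Poincar\'e inequality and then $\|\nabla\xi\|_{L^1(\mathcal{Q})}\le\|\nabla\xi\|_{L^2(\mathcal{Q})}$, which avoids your second Cauchy--Schwarz in the counting measure but lands on the same bound.
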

	\begin{proof}
		For readability, we moved the proof to the Appendix \ref{L1Proof}
	\end{proof}

	\section{Self-similar mixing}
	\label{selfsimaiu}
	In order to properly motivate our definition of cellular type structure, we briefly summarize the key points of the construction by Alberti, Crippa and Mazzucato in \cite{AlbCrippa} using an ansatz of self-similarity.\\\\
	We fix a tiling parameter $\lambda>0$ and a time parameter $\tau>0$ and define the time steps
	\begin{equation}
		\label{timesteps}
		T_n=\sum_{i=0}^{n-1}\tau^i\hspace{1cm}\textnormal{ for } n=1,2,\ldots,\infty\, .
	\end{equation}
The key element in the construction is a velocity field $u_0$ with corresponding solution $\rho_0$ on the two dimensional square $\mathcal{Q}$ defined in the time interval $0\leq t\leq 1$, such that
	\begin{itemize}
		\item [(i)] $u_0$ is bounded in $\dot{W}^{s,p}(\mathbb{R}^2)$ uniformly in time and divergence free and tangent to $\partial\mathcal{Q}$ ;
		\item [(ii)] For each $Q\in\mathcal{T}_{\lambda}$ we have that
		\begin{equation*}
			\rho_0(1,x)=\rho_0\left(0,\frac{x-r_Q}{\lambda}\right)\hspace{0.5cm}\text{for each }x\in Q\, ,
		\end{equation*} 
		where $r_Q$ is the center of the square $Q$.
	\end{itemize}
\begin{figure}[h]
	\label{selfsimfig}
		\begin{center}
			\scalebox{0.35}{\input{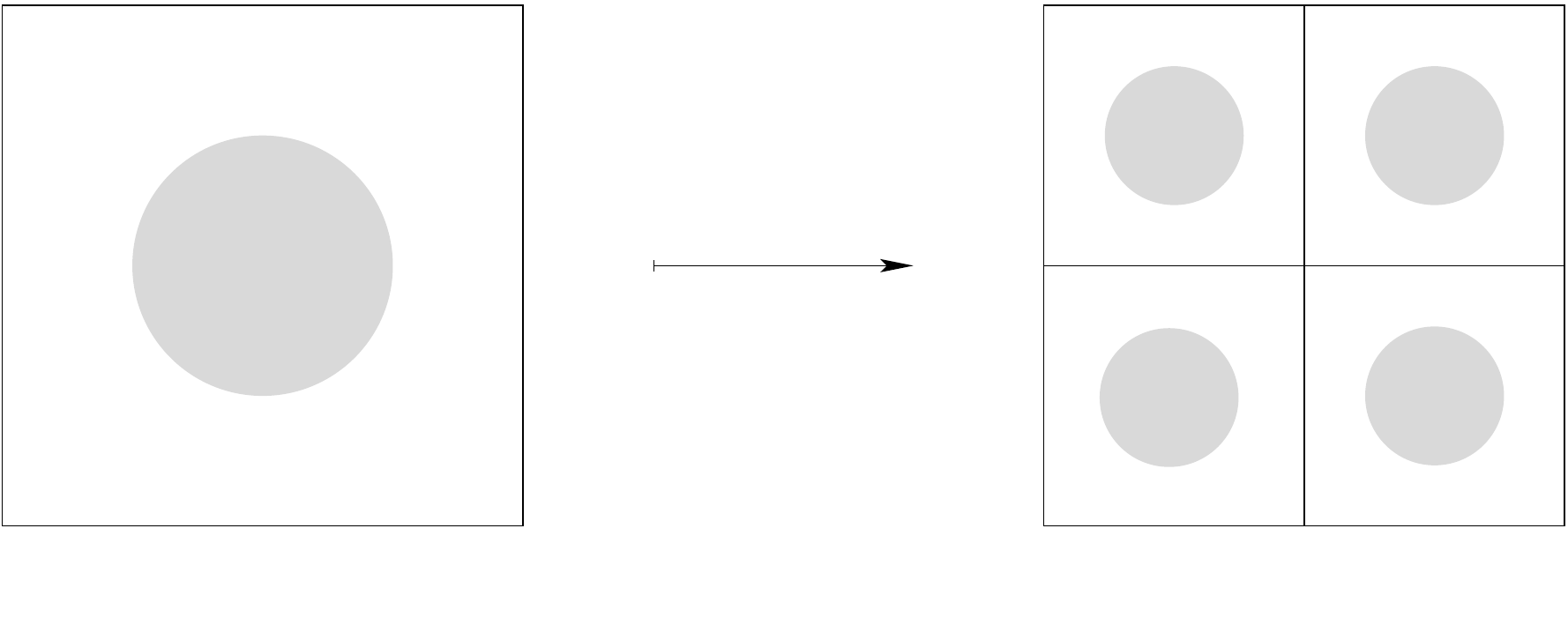tex_t}}
		\end{center}
		  \caption{Example of a self-similar basic building block}
			\label{selfsimilarel}
\end{figure}
	This pair of velocity field $u_0$ with corresponding solution $\rho_0$ serves as a basis, or a \textit{basic building block} for the remainder of the construction of the velocity field $u$ with corresponding solution $\rho$, in the sense that in each time interval $[T_n,T_{n+1})$, we write $u$ and $\rho$ as the sum of rescaled translations of $u_0$ and $\rho_0$, respectively, as we are going to describe now. \\\\
	For times $T_n\leq t< T_{n+1}$, in any tile $Q\in \mathcal{T}_{\lambda^n}$ we set $u$ and $\rho$ as
	\begin{equation}
		\label{tennisweezy}
		u(t,x)=\frac{\lambda^n}{\tau^n}u_0\left(\frac{t-T_n}{\tau^n},\frac{x-r_Q}{\lambda^n}\right)\hspace{0.5cm}\textnormal{and}\hspace{0.5cm}\rho(x,t)=\rho_0\left(\frac{t-T_n}{\tau^n},\frac{x-r_Q}{\lambda^n}\right),
	\end{equation}
	where $r_Q$ is the center of the cube $Q$. Note that, since $\rho_0(0,\cdot)$ is mean-free, conditions (ii) and \eqref{tennisweezy} imply that
	\begin{equation}
		\label{ohmy}
		\dashint_{Q}\rho(t,y)\,dy=0
	\end{equation}
	for all $Q\in\mathcal{T}_{\lambda^n}$ and all $T_n\leq t< T_{n+1}$, which by Lemma \ref{tilingmixing} ensures that $\mix(\rho(t,\cdot))\sim\lambda^n$ on $[T_n,T_{n+1})$ (or better). In a last step, a scaling argument shows that
	\begin{itemize}
		\item[(i)] If $u$ is bounded in $\dot{W}^{1,p}(\mathbb{R}^2)$ uniformly in time (enstrophy constraint), we can set the time parameter $\tau=1$, which yields an \textit{exponential} decay: 
		\begin{equation*}
			\mathcal{G}(\rho(T_n,\cdot))\simeq C\lambda^{T_n}\hspace{0.5cm}\text{ and }\hspace{0.5cm}\|\rho(T_n,\cdot)\|_{\dot{H}^{-1}(\mathbb{R}^2)}\simeq C\lambda^{\frac{T_n}{2}}\,;
		\end{equation*}
		\item[(ii)] If $u$ is bounded in $\dot{W}^{s,p}(\mathbb{R}^2)$, where $s>1$ uniformly in time (palenstrophy constraint), it is necessary to set the time parameter $\tau>1$, which yields the following \textit{polynomial }decay:
		\begin{equation*}
		\mathcal{G}(\rho(T_n,\cdot))\simeq C T_n^{-\frac{1}{s-1}}\hspace{0.5cm}\text{ and }\hspace{0.5cm}\|\rho(T_n,\cdot)\|_{\dot{H}^{-1}(\mathbb{R}^2)}\simeq CT_n^{-\frac{2}{s-1}}\, .
		\end{equation*}	
	\end{itemize}
	\begin{remark}
		To be more precise, it is not evident that a basic element as in Figure \ref{selfsimilarel} with a uniform-in-time bound on the $H^2$-norm exists. For this reason, the construction is done using \textit{quasi} self-similar basic elements (for details see \cite{AlbCrippa}, Chapter 6). This is however only a technical difference: the basic idea of the construction remains the same.
	\end{remark}
	
	\section{Cellular mixing}
	\label{cellmix}
	In this chapter we introduce our notion of cellular flow. As in the self-similar construction, given a time parameter $\tau>0$ we define the time steps
	\begin{equation*}
		T_n=\sum_{i=0}^{n-1}\tau^i\hspace{1cm}\textnormal{ for } n=1,2,\ldots,\infty
	\end{equation*}
	and fix a tiling parameter $\lambda>0$, such that $\lambda^{-1}$ is an integer greater or equal two. We also fix for all the rest of the paper an accuracy parameter $\kappa\in (0,1)$ for the geometric mixing scale. For this reason we will often not explicitly state dependencies of constants on $\kappa$.\\\\
	To summarize the previous section, we say that a pair of a velocity field $u$ with corresponding solution $\rho$ has a \textit{self-similar} structure if on any time interval $T_n\leq t < T_{n+1}$ both $u$ and $\rho$ can be written as the sum of rescaled translations of \textit{one} basic building block $(u_0,\rho_0)$ as in \eqref{tennisweezy}. We now generalize this structure by simply allowing a very general set $\mathcal{B}$ of basic building blocks $(u_0,\rho_0)$, which keep two key properties of the basic building block used in the self-similar construction. The idea is to replace the precise assumption on the geometry of the basic building block in the self-similar case with a more general assumption on the mixing scale only. In particular, this means that~$\mathcal{B}$ can be an infinite family. We then say that a pair of a velocity field $u$ with corresponding solution $\rho$ to the continuity equation \eqref{Cauchyprob} is \textit{of cellular type} if it can be patched together with basic elements $(u_0,\rho_0)\in\mathcal{B}$. See Definition \ref{basicbuildingdefin} for the precise definition.\\\\
	The first key property we adapt from the self-similar basic building block is the equal distribution of the tracer among all tiles, i.e. for all $(u_0,\rho_0) \in\mathcal{B}$ we have that
	\begin{equation}
		\label{kasalla}
		\int_{Q}\rho_0(1,x)\, dx=0
	\end{equation}
	for all $Q\in\mathcal{T}_{\lambda}$. \\\\
The second key property we implement is a condition which keeps a basic building block from mixing \textit{too well}. For this, we introduce the \textit{characteristic length scale} of a set, which is inspired by a similar length scale introduced in \cite{Kiselev} (proof of Lemma 2.6).
	We fix a parameter $\bar{s}\in(0,1)$. This parameter will be fixed for the rest of this paper and for this reason explicit dependencies of constants on $\bar{s}$ will often not be stated. 
	\begin{definition} [Characteristic length scale]
		\label{zulu}
		Let a set $A\subset E$ of positive measure be given. We denote by $\mathcal{C}=\mathcal{C}(E, A)$ the set of all admissible balls $B(x,r) \subset E$ such that
		\begin{equation}
		\label{saucinonu}
		\frac{|A\cap B(x,r)|}{|B(x,r)|}>1-\frac{1-\kappa}{2}\cdot\bar{s}.
		\end{equation}
		We define the \textit{characteristic length scale} of the set $A$ with respect to $E$ as
		\begin{equation*}
		LS_E(A)=\sup\left\lbrace r \,|\,B(x,r)\in \mathcal{C}\right\rbrace.
		\end{equation*}
	\end{definition}
	\begin{remark}
This scale determines the largest radius of a ball which violates \emph{significantly} (that is, with the uniform gap $1 - \bar s$)
condition \eqref{AggroBerlin} of the geometric mixing scale for any binary tracer $\rho$ and will serve as our measurement of how un-mixed the tracer is. As an example, consider a solution $\rho$ of the following type:
			\begin{equation*}
			\rho(t,\cdot)|_{\mathcal{Q}}=\begin{cases}1 & \mbox{on }A\\ c & \mbox{else , }\end{cases}
			\end{equation*}
		where $0<|A|\leq \frac{1}{2}$ and the constant $c$ is chosen such that $\rho$ is mean-free (in particular $-1\leq c<0$). Let us determine how small $\epsilon$ has to be, in order that a ball $B(x,r)$ such that
				\begin{equation}
				\frac{|A\cap B(x,r)|}{|B(x,r)|}>1-\epsilon
				\end{equation}
				violates condition \eqref{AggroBerlin} of the geometric mixing scale. We compute
				\begin{equation}
				\label{frappedyrap}
				\begin{split}
				\left|\,\dashint_{B(x,r)}\rho(t,y)\,dy\right|&=\frac{1}{|B(x,r)|}\left(1\cdot|A\cap B(x,r)|+c\cdot \left|B(x,r)\setminus A\right|\right)\\
				&>1-2\epsilon\stackrel{!}{\geq}\kappa
				\end{split}
				\end{equation}
				and hence any $\epsilon\leq\frac{1-\kappa}{2}$ works.  In particular, any ball in \eqref{saucinonu} violates condition \eqref{AggroBerlin} of the geometric mixing scale. Notice that the notion of characteristic length scale becomes weaker as the parameter $\bar s$ approaches the value $1$. 
			\end{remark}
				In summary, we define a basic building block $(u_0,\rho_0)$ as follows.
				\begin{definition}[Cellular basic building block]
					\label{basicbuildingdefin}
					Let a tiling parameter $\lambda>0$, real values $1<p\leq \infty$, $s>1$, a set size $0<\theta\leq \frac{1}{2}$ and a parameter $a>0$ be given. We define $\mathcal{B}=\mathcal{B}_{\lambda,a,p,\theta,s}$ as the set of all pairs $(u_0,\rho_0)$ such that:
					\begin{itemize}
						\item [(i)] $u_0\in L_t^{\infty}(\dot{W}_x^{s,p})$ on the time interval $0\leq t\leq 1$ and is divergence free, $u_0=0$ on $\partial \mathcal{Q}$ and zero outside of $\mathcal{Q}$;
						\item [(ii)] $\rho_0$ is a solution to the Cauchy problem \eqref{Cauchyprob} on the time interval $0\leq t\leq 1$ with velocity field $u_0$ and initial value
						\begin{equation}
						\label{woreport}
						\rho_0(0,\cdot)=\begin{cases}1 & \mbox{on }A\\ c & \mbox{on }\mathcal{Q}\setminus A\end{cases}
						\end{equation} 
						where $LS_{\mathcal{Q}}(A)\geq a$ and $|A|=\theta$;
						\item[(iii)] For every $Q\in \mathcal{T}_\lambda$ we have that
						\begin{equation}
						\label{equaldistri}
						\int_{Q}\rho_0(1,x)\, dx=0.
						\end{equation}	
					\end{itemize}
				\end{definition}
				\begin{definition}[Solution of cellular type]
					\label{celltype}
					We say that a pair of a velocity field $u$ with corresponding solution $\rho$ to the continuity equation $\eqref{Cauchyprob}$ is \textit{of $(\lambda,a,p,\theta,s)$-cellular type}, or simply \textit{of cellular type} when parameters are fixed, if on any time interval $T_n\leq t < T_{n+1}$ and for any tile $Q\in \mathcal{T}_{\lambda^n}$ there exists a $(u_0,\rho_0)\in\mathcal{B}_{\lambda,a,p,\theta,s}$ (depending on $Q$), such that
					\begin{equation}
					\label{jeffwecan}
					u(t,x)=\frac{\lambda^n}{\tau^n}u_0\left(\frac{t-T_n}{\tau^n},\frac{x-r_Q}{\lambda^n}\right)\hspace{0.5cm}\textnormal{and}\hspace{0.5cm}\rho(t,x)=\rho_0\left(\frac{t-T_n}{\tau^n},\frac{x-r_Q}{\lambda^n}\right)\hspace{0.25cm}\textnormal{ for }x\in Q
					\end{equation}
					where $r_Q$ is the center of the tile $Q$.
				\end{definition}
	
	\begin{remark}
		\label{miauu}
		Note that as a consequence of \eqref{woreport}, \eqref{equaldistri} and \eqref{jeffwecan}, for any $Q\in\mathcal{T}_{\lambda^n}$ we have that
	\begin{equation*}
	\rho(T_n,\cdot)|_Q=\begin{cases}1 & \mbox{on }B\\ c & \mbox{else, }\end{cases}
	\end{equation*}
where $B=B(Q,n)$ is a set such that $|B|=\lambda^{2n}\theta$ and $LS_{Q}(B)\geq \lambda^n a$.
\end{remark}
	\begin{remark}
				\label{platel}
The idea in the definition of a cellular flow is to reach a quality improvement of the mixedness of the passive tracer by equally redistributing the tracer in each step among a finer sub-tiling. By Lemma \ref{tilingmixing}, we expect the mixing scales to decay in each step by a factor $\lambda$ on average due \textit{solely} to this effect. Without implementing any condition which keeps a basic building block from mixing \textit{too well}, however, a basic building block would be allowed to mix to arbitrarily small scales, with tracer movements constrained only to $\mathcal{Q}$, as schematically illustrated in Figure \ref{koda}.
\begin{figure}[h]
\begin{center}
	\scalebox{0.35}{\input{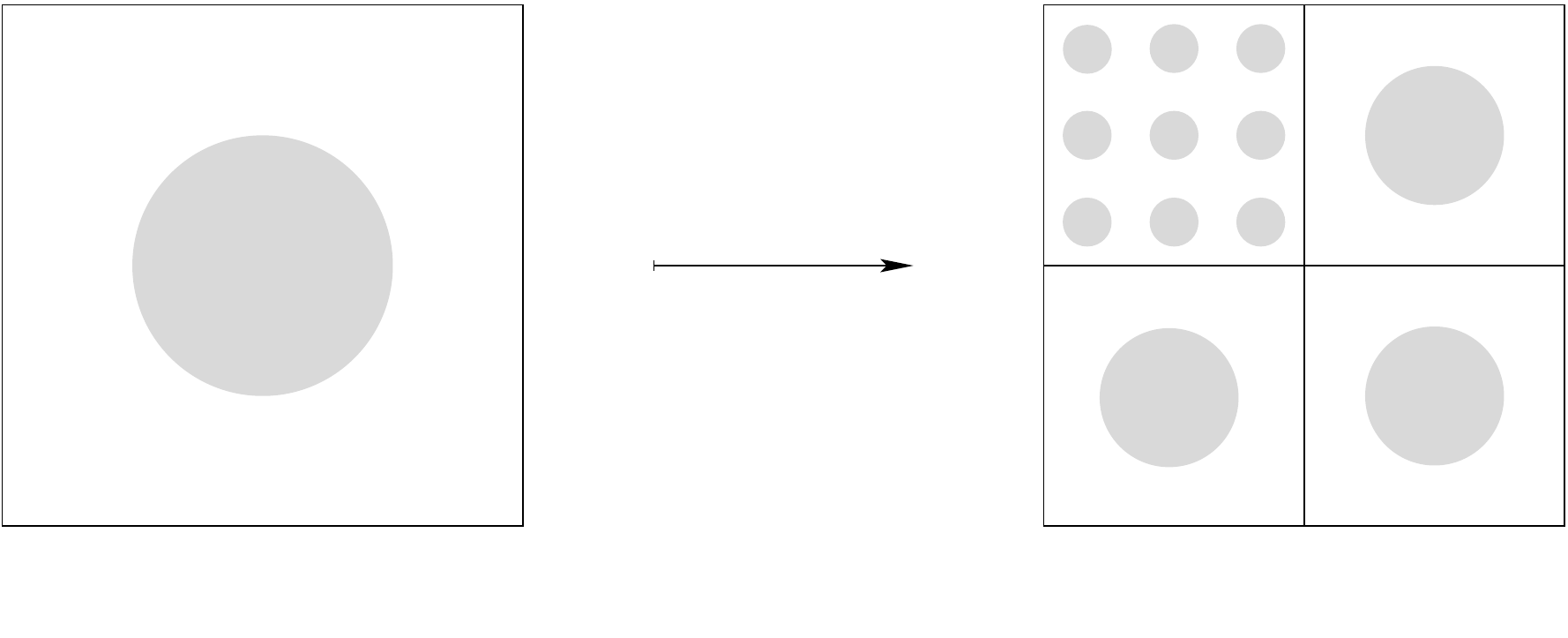tex_t}}
\end{center}
\caption{Example of a basic building block mixing too well on a large scale}
	\label{koda}
\end{figure}
For such evolutions it is in general not possible to derive lower bounds on the mixing scale better than exponential. In order to exclude this possibility we fix the parameter $a$ in Definition~\ref{celltype}. This forces a cellular basic building block to be in a certain sense \textit{``self-similar with respect to the degree of mixedness''}, meaning the following. While for a self-similar basic building block $(u_0,\rho_0)$ the solution $\rho_0(1,\cdot)$ is the sum of rescaled versions of $\rho_0(0,\cdot)$, for a cellular basic buidling block $(u_0,\rho_0)$ we have that $\rho_0(1,\cdot)$ can be written as the sum of rescaled versions of $\tilde{\rho}_1,\ldots,\tilde{\rho}_{\lambda^{-2n}}$, where each $\tilde{\rho}_i$ is unmixed to the same degree as $\rho_0(0,\cdot)$.
	\end{remark}
	
	\begin{remark}
		In our case, the set $E$ in Definition \ref{zulu} will always be either $\mathcal{Q}$ or a tile $Q\in\mathcal{T}_{\lambda}$, which are both types of sets for which $LS_E(A)\in\mathbb{R}$ for any measurable set $A\subset E$ of positive measure, since for a.e. point $x \in A$ there exists $r>0$ such that $B(x,r)\subset E$ and
		\begin{equation*}
			\lim_{r\to 0}\frac{|A\cap B(x,r)|}{|B(x,r)|}=1
		\end{equation*}
		and therefore $\mathcal{C}$ is non-empty.
	\end{remark}
	The following lemma states that there are lower bounds on the mixing scale for cellular flows.
	\begin{lemma}
		\label{fireworks}
	Let $\rho$ be a solution of $(\lambda,a,p,\theta,s)$-cellular type. Then there exists a constant $C$ such that
			\begin{itemize}
				\item [(i)] $\mathcal{G}(\rho(T_n,\cdot))\geq C \lambda^n a$,
				\item [(ii)] $\|\rho(T_n,\cdot)\|_{\dot{H}^{-1}(\mathbb{R}^2)}\geq C \lambda^{2n} a$.
			\end{itemize}
	\end{lemma}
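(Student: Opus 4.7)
The plan is to reduce both inequalities to the structural description of $\rho(T_n,\cdot)$ provided by Remark~\ref{miauu}: at time $T_n$, on every tile $Q\in\mathcal{T}_{\lambda^n}$ the restriction $\rho(T_n,\cdot)|_Q$ is the binary function equal to $1$ on a set $B=B(Q,n)$ with $LS_Q(B)\geq \lambda^n a$, and equal to some $c\in[-1,0)$ elsewhere. In particular $\|\rho(T_n,\cdot)\|_\infty=1$, and it suffices to work inside a single tile.

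For part~(i), fix any tile $Q$. By Definition~\ref{zulu}, the lower bound $LS_Q(B)\geq \lambda^n a$ furnishes, for every $r'<\lambda^n a$, a ball $B(x_0,r')\subset Q$ such that $|B\cap B(x_0,r')|/|B(x_0,r')|>1-(1-\kappa)\bar{s}/2$. Since $\rho(T_n,\cdot)|_Q$ is precisely the kind of binary tracer considered in the remark following Definition~\ref{zulu}, the same computation as in~\eqref{frappedyrap} (applied with $\epsilon=(1-\kappa)\bar{s}/2\leq (1-\kappa)/2$) shows that any such ball violates condition~\eqref{AggroBerlin} of the geometric mixing scale. Hence $\mathcal{G}(\rho(T_n,\cdot))\geq r'$ for every $r'<\lambda^n a$, which gives~(i).

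For part~(ii), I would exploit the same ball $B(x_0,r)$ with $r=\lambda^n a$ via duality. Pick a radial bump $\phi\in C^\infty_c(B(x_0,r))$ which equals $1$ on $B(x_0,(1-\eta)r)$ and decays linearly to zero on the remaining annulus, where $\eta\in(0,1)$ depends only on $\bar{s}$ and $\kappa$. A direct scaling computation yields $\|\nabla\phi\|_{L^2(\mathbb{R}^2)}=C(\eta)$, independent of $r$, while splitting the integral into the good part $B\cap B(x_0,r)$ and the bad part (of measure at most $(1-\kappa)\bar{s}\,\pi r^2/2$) and using $|c|\leq 1$ and $0\leq\phi\leq 1$ gives
\begin{equation*}
\int \rho(T_n,\cdot)\,\phi \;\geq\; \pi r^2\bigl(1-\tfrac{\eta}{2}-(1-\kappa)\bar{s}\bigr).
\end{equation*}
Since $\bar{s}<1$, one can pick $\eta$ small enough to make the right-hand side a strictly positive constant multiple of $r^2$. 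Duality then produces $\|\rho(T_n,\cdot)\|_{\dot{H}^{-1}}\geq\int\rho(T_n,\cdot)\phi/\|\nabla\phi\|_{L^2}\gtrsim r^2=\lambda^{2n}a^2$, which is $\geq C\lambda^{2n}a$ once the fixed parameter $a$ is absorbed into the constant.

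The only delicate point is that $LS_Q(B)\geq \lambda^n a$ is a purely volumetric statement, so $B$ may be geometrically intricate inside $B(x_0,r)$. The argument survives precisely because the uniform gap $\bar{s}<1$ forces $B$ to occupy a fraction of the ball strictly larger than a half, so the chosen bump has mean strictly larger than the worst possible contribution of $B(x_0,r)\setminus B$. Attempting to strengthen the bound by summing test functions from different tiles via $\phi=\sum_Q\phi_Q$ is counterproductive: the $L^2$-orthogonality of the disjointly supported gradients inflates $\|\nabla\phi\|_{L^2}$ by a factor that dominates the gain in the numerator, so the single-tile test is already the optimal choice for this approach.
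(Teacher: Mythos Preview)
Your proof is correct and follows essentially the same route as the paper: part~(i) extracts a ball from the characteristic length scale condition and invokes the computation~\eqref{frappedyrap}, and part~(ii) tests against a bump localized at that ball via duality. The only cosmetic difference is that the paper places its bump so that it equals $1$ on all of $B(x_0,r)$ and is supported in the slightly larger ball $B(x_0,r(1+\kappa/20))$ (citing \cite{Yao},\cite{Kiselev} for the construction), whereas you keep the support inside $B(x_0,r)$; both choices give $\|\nabla\phi\|_{L^2}$ independent of $r$ and a numerator $\gtrsim r^2$, so the distinction is immaterial.
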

	\begin{proof}
 For readability, we moved the proof to the Appendix \ref{birdman}
	\end{proof}

	\section{Main result and sketch of the proof}
	\label{mainresuaiu}
	For a parameter $\tau>0$ we define the time steps
	\begin{equation*}
		T_n=\sum_{i=0}^{n-1}\tau^i\hspace{1cm}\textnormal{ for } n=1,2,\ldots,\infty\, .
	\end{equation*}
	Let further a tiling parameter $\lambda>0$, a set size $\theta>0$, and parameters $a>0$ and $p>1$ be given.
	\begin{mainthm}
		Let a divergence-free velocity field $u\in L_t^\infty(\dot{W}^{s,p})$ where $s>1$ and a solution $\rho$ to the continuity equation \eqref{Cauchyprob}  of $(\lambda,a,p,\theta,s)$-cellular type be given. Then the decay of both the geometric and the functional mixing scale cannot be faster than polynomial, that is, for all $n\in\mathbb{N}$ we have that
			\begin{equation}
			\label{topdawg}
			\mathcal{G}(\rho(T_n,\cdot))\geq C T_n^{-\frac{1}{s-1}}\hspace{0.5cm}\text{ and }\hspace{0.5cm}\|\rho(T_n,\cdot)\|_{\dot{H}^{-1}(\mathbb{R}^2)}\geq CT_n^{-\frac{2}{s-1}}
			\end{equation}	
			where $C=C(a,\lambda,s)$.
	\end{mainthm}

	\begin{remark}[for all $T_n$ vs. for all $t\geq 0$]
 Note that condition \eqref{topdawg} a priori only holds for all time steps $T_n$, where $n\in\mathbb{N}$. This is sufficient in order to show that the decay of both mixing scales cannot be faster than polynomial and therefore to exclude the possibility of exponential mixing by a cellular flow. We could however also get a polynomial lower bound which holds for all $t\geq 0$, i.e.
		\begin{equation}
		\label{cigar}
		\mathcal{G}(\rho(t,\cdot))\geq C t^{-\frac{1}{s-1}}\hspace{0.5cm}\text{ and }\hspace{0.5cm}\|\rho(t,\cdot)\|_{\dot{H}^{-1}}\geq Ct^{-\frac{2}{s-1}}
		\end{equation}
		for all $t\geq 0$, by preventing a basic building block  $(u_0,\rho_0)$ from mixing \textit{too well} also in the interior of the time interval $(0,1)$. This can be achieved for example by adding the following constraint: 
		\begin{itemize}
		\item[(iv)] For all $t\in(0,1)$ we have that
		\begin{equation*}
		\rho_0(t,\cdot)|_{\mathcal{Q}}=\begin{cases}1 & \mbox{on }B_{t}\\ c & \mbox{else }\end{cases}
		\end{equation*}
		where $LS_{\mathcal{Q}}(B_{t})\geq \lambda a$.
	\end{itemize}
	\end{remark}
\begin{remark}[sharpness of the polynomial lower bound]
The quasi-self similar construction in \cite{AlbCrippa} is a special case of a cellular flow which achieves polynomial decay under a constraint in $L^{\infty}(\dot{W}^{s,p})$ (where $s>1$ and $p>1$) and therefore the polynomial lower bound in the Main Theorem is \textit{sharp}. For completeness, in the Appendix \ref{grischun} we show that any cellular flow which is patched together with basic building blocks with certain uniform bounds will achieve \textit{at least} a polynomial decay. This is a simple generalization of the quasi self-similar computations in Chapter 6 of \cite{AlbCrippa}.
\end{remark}

	\begin{proof}[Proof of the Main Theorem]
	\emph{Step 1: Proof for fine tilings.}	We will first show the result for a sufficiently fine tiling, i.e. assuming
		\begin{equation}
			\label{lambdachoice}
			0<\lambda \leq C(a,\bar s)
		\end{equation}
		where $C(a)$ is a constant which will be determined in Theorem \ref{yolinski}.
		For such a tiling, the proof consists of two main parts:
		\begin{itemize}
			\item [(i)]
			The gradient of any $u_0$, where $(u_0,\rho_0)\in\mathcal{B}_{\lambda,a,p,\theta,s}$, satisfies the following lower bound:
			\begin{equation}
				\label{MinCostner}
				M_{\lambda,a,p}:=\inf_{(u_0,\rho_0)\in\mathcal{B}}\left\lbrace \int\limits_{0}^1\|\nabla u_0(t,\cdot)\|_{L^p}\,dt\right\rbrace > 0\,. 
			\end{equation}
			The details of this estimate will be in Section \ref{MinCostEachStep}.
			\item[(ii)]
			Using estimate \eqref{MinCostner}, a scaling argument in Section \ref{ScalingAnalysis} will show that if $\|u(t,\cdot)\|_{\dot{W}^{s,p}}$ has to be uniformly bounded in time, it is necessary to choose the time parameter $\tau$ \textit{strictly greater than }$1$. This will be sufficient to conclude that the decay of the mixing scale is as in \eqref{topdawg}.
		\end{itemize}
		This concludes the proof for all $0<\lambda<C(a,\bar s)$. \\
		
		\emph{Step 2: Reducing the general case to Step 1.} Choosing a fine tiling as in \eqref{lambdachoice} is something we can do without loss of generality. Indeed, in the case where $\frac{1}{2}\geq\lambda\geq C(a,\bar s)$, we can linearly rescale the pair $u$ and $\rho$ in time in such a way that the rescaled pair $(\tilde{u}, \tilde{\rho})$ takes the form of a cellular flow with time parameter $\tilde{\tau}=\tau^l$ and tiling parameter $\tilde{\lambda}=\lambda^l$, where $0<\lambda^l\leq C(a,\bar s)$. Since the rescaled pair fulfills \eqref{lambdachoice}, using the first part of the proof we deduce that the decay of $\mix(\tilde{\rho}(t,\cdot))$ is at most polynomial, which implies that also the decay of $\mix(\rho(t,\cdot))$ is at most polynomial (since it is a linear rescaling). The details of this argument are given in Appendix \ref{Finetiling}.
	\end{proof}
	\section{Minimal cost in each step}
	\label{MinCostEachStep}
	In this chapter we prove the estimate \eqref{MinCostner} in \emph{Step 1} part (i) in the proof of the Main Theorem. The minimal cost estimate for a sufficiently fine tiling presented in Theorem \ref{yolinski} is mainly an adaptation of Lemma 2.6 in \cite{Kiselev} for our setting. The key element in the proof is the following regularity result for regular Lagrangian flows by Crippa and De Lellis \cite{LellisCrippa} (for the result and definition of regular Lagrangian flows, see \cite{LellisCrippa}):
	\begin{theorem}
		[Crippa and De Lellis]
		\label{CrippaDeLellis}
		Let $\Phi(x)=\Psi(1,x)$ be the flow map of the (incompressible) vector field $u$, let $p>1$ be given. For every $\eta>0$, there exists a set $E\subset \mathcal{Q}$ and a constant $C=C(p)$, such that $|E|\leq \eta$ and  
		\begin{equation}
			\label{Chuia}
			\Lip(\Phi^{-1}|_{E^c})\leq \exp\left(\frac{C}{\eta^{1/p}}\int_{0}^{1}\|\nabla u(s,\cdot)\|_{L^p}\,ds\right)\, .
		\end{equation}
		Here
		\begin{equation*}
		\Lip(\Phi^{-1}|_{E^c}):= \sup_{\substack{x,y\in E^c \\ x\neq y}}\frac{\left| \Phi^{-1}(x)-\Phi^{-1}(y)\right|}{|x-y|}\, .
		\end{equation*}
	\end{theorem}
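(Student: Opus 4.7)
The plan is to follow the logarithmic-functional approach originally used by Crippa and De Lellis: introduce a quantity that measures how strongly the forward flow compresses pairs of points, control its growth via the Sobolev regularity of $u$, and extract the Lipschitz estimate for the inverse flow on a large set by a single Chebyshev argument. Throughout, I would first assume $u$ is smooth, with classical ODE flow $X(t,x)=\Psi(t,x)$, and prove the estimate with constants depending only on $\int_0^1\|\nabla u(s,\cdot)\|_{L^p}\,ds$; the general case then follows from the regular Lagrangian flow framework by stability and approximation.

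Concretely, I would fix a regularization parameter $\delta>0$ and introduce
\[
A_\delta(t,x,y) := \log\left(1 + \frac{|x-y|}{|X(t,x) - X(t,y)| + \delta|x-y|}\right),
\]
which is large precisely where the flow compresses the pair $(x,y)$. Differentiating in $t$ and applying the Morrey-type pointwise inequality
\[
|u(a) - u(b)| \leq C|a-b|\bigl(M|\nabla u|(a) + M|\nabla u|(b)\bigr),
\]
valid for $u \in W^{1,p}$ with $p>1$ and $M$ the Hardy--Littlewood maximal operator, yields
\[
\frac{d}{dt}A_\delta(t,x,y) \leq C\bigl(M|\nabla u(t,\cdot)|(X(t,x)) + M|\nabla u(t,\cdot)|(X(t,y))\bigr).
\]
Integrating in $t\in[0,1]$, using that $X(t,\cdot)$ is measure-preserving together with the $L^p$-boundedness of $M$ to estimate the $L^p_x$ norm, and placing the supremum in $y$ inside the norm via a symmetric reformulation of $A_\delta$, I would obtain a bound of the form
\[
\Bigl\|\sup_{y\in\mathcal{Q}\setminus\{x\}} A_\delta(1,x,y)\Bigr\|_{L^p_x(\mathcal{Q})} \leq C(p)\int_0^1\|\nabla u(s,\cdot)\|_{L^p}\,ds.
\]

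A single application of Chebyshev's inequality to this $L^p$ bound then produces an exceptional set $E\subset\mathcal{Q}$ with $|E|\leq\eta$ such that for every $x\in E^c$ and every $y\neq x$,
\[
A_\delta(1,x,y) \leq L := \frac{C(p)}{\eta^{1/p}}\int_0^1\|\nabla u(s,\cdot)\|_{L^p}\,ds.
\]
Unwinding the definition of $A_\delta$, this rearranges to
\[
|x-y| \leq (e^{L}-1)\bigl(|\Phi(x) - \Phi(y)| + \delta|x-y|\bigr),
\]
and choosing $\delta$ small enough that $\delta(e^L-1) \leq 1/2$, then absorbing, yields $|x-y| \leq 2e^L|\Phi(x) - \Phi(y)|$, i.e.\ the Lipschitz bound on $\Phi^{-1}|_{E^c}$ with constant $\exp\bigl(C\eta^{-1/p}\int\|\nabla u\|_{L^p}\bigr)$.

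The main obstacle is obtaining a single exceptional set $E$ valid uniformly in $y$, while keeping the sharp exponent $1/p$ in $\eta^{-1/p}$. A naive Chebyshev argument produces a bad set $E_y$ depending on $y$, and a crude union over $y$ destroys the exponent. Overcoming this requires a careful choice of the functional so that the sup in $y$ sits inside the $L^p_x$ norm before Chebyshev is applied, and it is at this step that incompressibility is crucially used (both to convert $M|\nabla u|(X(s,\cdot))$ back to an $L^p$ norm of $M|\nabla u|$ on $\mathcal{Q}$ and to balance the roles of $x$ and $y$ in the differential inequality).
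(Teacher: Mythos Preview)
The paper does not prove this theorem; it is quoted from \cite{LellisCrippa} and used as a black box in the proof of Theorem~\ref{yolinski}. So there is no in-paper proof to compare against, and your proposal has to be judged on whether it correctly reproduces the Crippa--De Lellis argument.

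Your overall strategy (logarithmic functional, pointwise maximal-function inequality, measure preservation, Chebyshev) is indeed that argument. But the step you flag as ``the main obstacle'' is a real gap, and your proposed resolution does not work. Integrating your differential inequality gives
\[
A_\delta(1,x,y)\;\leq\; C\bigl(g(x)+g(y)\bigr),\qquad g(z):=\int_0^1 M|\nabla u(t,\cdot)|\bigl(X(t,z)\bigr)\,dt,
\]
and $\sup_y g(y)=\|g\|_{L^\infty}$ is in general infinite, so $\sup_y A_\delta(1,x,y)$ simply is not in $L^p_x$. Incompressibility does not ``balance the roles of $x$ and $y$'' in any way that fixes this; it only gives $\|g\|_{L^p}\leq C\int_0^1\|\nabla u(t,\cdot)\|_{L^p}\,dt$.

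The correct fix is easier than what you attempt, because the theorem asks for less than you are trying to prove: you only need the bound for $x,y$ \emph{both} in $E^c$, not for $x\in E^c$ and arbitrary $y$. Apply Chebyshev directly to $g$: with $\lambda=\|g\|_{L^p}\,\eta^{-1/p}$ the set $F=\{g>\lambda\}$ has $|F|\leq\eta$, and for $x,y\in F^c$ one gets $A_\delta(1,x,y)\leq 2C\lambda$, which unwinds to $|x-y|\leq e^{2C\lambda}|\Phi(x)-\Phi(y)|$ exactly as in your last paragraph. Setting $E=\Phi(F)$ (same measure, by incompressibility) then gives the stated bound on $\Lip(\Phi^{-1}|_{E^c})$. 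The original paper \cite{LellisCrippa} actually uses a different functional (a supremum over radii of averages over balls, rather than a pointwise supremum over $y$), but the symmetric-Chebyshev variant above is closer to your outline and works.
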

	\begin{theorem}[Minimal Cost]
		\label{yolinski}
		Let $\rho$ be a solution to the continuity equation with velocity field $u$ such that
		\begin{equation*}
			\rho_0(0,\cdot)=\begin{cases}1 & \mbox{on }A\\ c & \mbox{else , }\end{cases}
		\end{equation*}
		where $LS_\mathcal{Q}(A)\geq a$ and $|A|=\theta\leq \frac{1}{2}$. Then there exists a tiling parameter $\lambda=\lambda(a)$ such that if at time $t=1$ we have that
		\begin{equation}
			\label{conditione}
			\int_Q\rho(1,y)\,dy=0
		\end{equation}
		for any $Q\in\mathcal{T}_{\lambda}$, then 
		\begin{equation}
		\label{lerequire}
			\int\limits_{0}^1\|\nabla u(t,\cdot)\|_{L^p}\,dt\geq C(a,p,\bar s)>0\,,
		\end{equation}
		where the constant $C=C(a,p)$ does not depend on $u$.
	\end{theorem}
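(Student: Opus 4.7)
The plan is to argue by contradiction, adapting the Lipschitz-regularity strategy of Lemma 2.6 in \cite{Kiselev}. Suppose that the cost $M := \int_0^1 \|\nabla u(t,\cdot)\|_{L^p}\,dt$ is smaller than some constant $C_0 = C_0(a,p,\bar s) > 0$ to be fixed later, and let $\Phi$ denote the time-one flow map of $u$. Applying Theorem~\ref{CrippaDeLellis} with a parameter $\eta > 0$ yields an exceptional set $E \subset \mathcal{Q}$ with $|E| \leq \eta$ and $\Lip(\Phi^{-1}|_{E^c}) \leq L := \exp(CC_0/\eta^{1/p})$; by incompressibility also $E' := \Phi^{-1}(E)$ satisfies $|E'| \leq \eta$. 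The aim is to exploit the Lipschitz localization of $\Phi^{-1}$ off $E$ to show that, for $\lambda$ sufficiently fine, the equidistribution assumption~\eqref{conditione} is incompatible with the characteristic length scale bound $LS_\mathcal{Q}(A) \geq a$.

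Fix a ball $B(x_0, r) \subset \mathcal{Q}$ with $r \geq a/2$ witnessing the characteristic length scale, so that $|B(x_0,r) \setminus A| \leq \tfrac{1-\kappa}{2}\bar s \cdot \pi r^2$, and choose $\epsilon = \epsilon(\bar s, \kappa) \in (0,1)$ small enough that
\[
(1-\epsilon)^2 - \tfrac{1-\kappa}{2}\bar s - \theta \;\geq\; \tfrac{1}{4}\bigl(1 - (1-\kappa)\bar s\bigr) \;>\; 0,
\]
which is possible since $\theta \leq 1/2$ and $(1-\kappa)\bar s < 1$. Next, consider
\[
\mathcal{T}^* := \bigl\{Q \in \mathcal{T}_\lambda : Q \cap \bigl(\Phi(B(x_0,(1-\epsilon)r)) \setminus E\bigr) \neq \emptyset\bigr\}, \qquad U := \bigcup_{Q \in \mathcal{T}^*} Q,
\]
which by construction satisfies $|U| \geq \pi(1-\epsilon)^2 r^2 - \eta$. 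For each $Q \in \mathcal{T}^*$, pick a reference $y_Q$ in the defining intersection and apply the Lipschitz bound to any other $y \in Q \setminus E$, obtaining $|\Phi^{-1}(y) - \Phi^{-1}(y_Q)| \leq L\lambda\sqrt{2}$. Thus, provided $L\lambda\sqrt{2} \leq \epsilon a/2 \leq \epsilon r$, we conclude $\Phi^{-1}(Q) \subset B(x_0, r) \cup E'$ for every $Q \in \mathcal{T}^*$, and therefore $\Phi^{-1}(U) \subset B(x_0, r) \cup E'$.

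The contradiction now comes from two incompatible estimates of $|\Phi^{-1}(U) \cap A|$. By equidistribution~\eqref{conditione}, volume preservation, and $|U| \leq |B(x_0,r)| + |E'| \leq \pi r^2 + \eta$,
\[
|\Phi^{-1}(U) \cap A| \;=\; \theta |U| \;\leq\; \theta\bigl(\pi r^2 + \eta\bigr).
\]
On the other hand, $|\Phi^{-1}(U) \cap B(x_0, r)| \geq |U| - \eta \geq \pi(1-\epsilon)^2 r^2 - 2\eta$, and its $A^c$-part inside $B(x_0,r)$ is controlled by $|B(x_0,r)\setminus A|$, giving
\[
|\Phi^{-1}(U) \cap A| \;\geq\; \pi(1-\epsilon)^2 r^2 - 2\eta - \tfrac{1-\kappa}{2}\bar s \cdot \pi r^2.
\]
Combining the two yields $\pi r^2 \bigl[(1-\epsilon)^2 - \tfrac{1-\kappa}{2}\bar s - \theta\bigr] \leq 3\eta$, which is violated as soon as $\eta = \eta(a,\bar s, \kappa)$ is chosen small relative to $\pi(a/2)^2$ times the (positive) bracket.

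The main obstacle is organizing the parameters in the correct dependency order so that $\lambda$ ends up depending only on $a$ (and the globally fixed $p,\bar s,\kappa$) and not on $u$. One first fixes $\epsilon = \epsilon(\bar s,\kappa)$, then $\eta = \eta(a,\bar s,\kappa)$ so as to violate the bracket inequality above; only then is the final minimal-cost constant $C_0 = C(a,p,\bar s) > 0$ chosen, and finally $\lambda := \epsilon a/(2\sqrt{2}\exp(CC_0/\eta^{1/p}))$, so that $L\lambda\sqrt 2 \leq \epsilon a/2$ holds uniformly for every velocity field with $M \leq C_0$. This produces the contradiction and gives $M > C_0$, completing the proof.
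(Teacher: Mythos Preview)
Your argument is correct and shares the same backbone as the paper's proof: both rely on the Crippa--De~Lellis regularity estimate applied to the time-one map and on a ball $B(x_0,r)$ witnessing $LS_\mathcal{Q}(A)\geq a$. The organization of the contradiction, however, is different. The paper proceeds \emph{pointwise}: via a pigeonhole count over the tiles it exhibits a single tile containing both a point of $\Phi(A\cap\tilde B)\setminus E$ and a point of $\Phi(A^c\cap B^c)\setminus E$, and reads off $\Lip(\Phi^{-1}|_{E^c})\geq 2$ directly from this pair. You instead use the Lipschitz bound \emph{first}, to confine $\Phi^{-1}(U)$ inside $B(x_0,r)\cup E'$, and then derive a purely measure-theoretic contradiction by computing $|\Phi^{-1}(U)\cap A|$ in two ways (once via equidistribution, once via the density of $A$ in $B$). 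Your route avoids locating explicit witness points, at the price of slightly more bookkeeping with $\epsilon,\eta,\theta$; the paper's route yields a cleaner separation of $\lambda$ from $p$.

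One small point on your dependency order: as written, your $\lambda=\epsilon a/\bigl(2\sqrt 2\,\exp(CC_0/\eta^{1/p})\bigr)$ depends on $p$ and on the as yet unconstrained value $C_0$, whereas the statement asks for $\lambda=\lambda(a)$. This is easily repaired by reversing the last two choices: set $\lambda:=\epsilon a/(4\sqrt 2)$ first (depending only on $a,\bar s,\kappa$), so that the requirement $L\lambda\sqrt 2\leq \epsilon a/2$ becomes $L\leq 2$, i.e.\ $M\leq \eta^{1/p}\log 2/C=:C_0(a,p,\bar s)$; the contradiction then gives $M>C_0$ with the dependencies exactly as stated.
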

	
	\begin{proof}
		In order to understand the basic geometric idea of the proof, for simplicity let us assume that  there exists a ball $B=B(x,r)$ with radius $r\geq\frac{3}{4} a$ which is \textit{entirely} filled with points of the set $A$. Let us further assume $\Phi^{-1}$ is Lipschitz continuous on $\mathcal{Q}$, with the Lipschitz constant fulfilling
		\begin{equation}
			\label{wrongLip}
			\Lip(\Phi^{-1})\leq \exp\left(\int_{0}^{1}\|\nabla u(s)\|_{L^p}\,ds\right)
		\end{equation}
		neglecting the dependence on the set $E$ in \eqref{Chuia}. 
		\begin{figure}[h]
		\begin{center}
			\label{yolodude}
\scalebox{.4}{\input{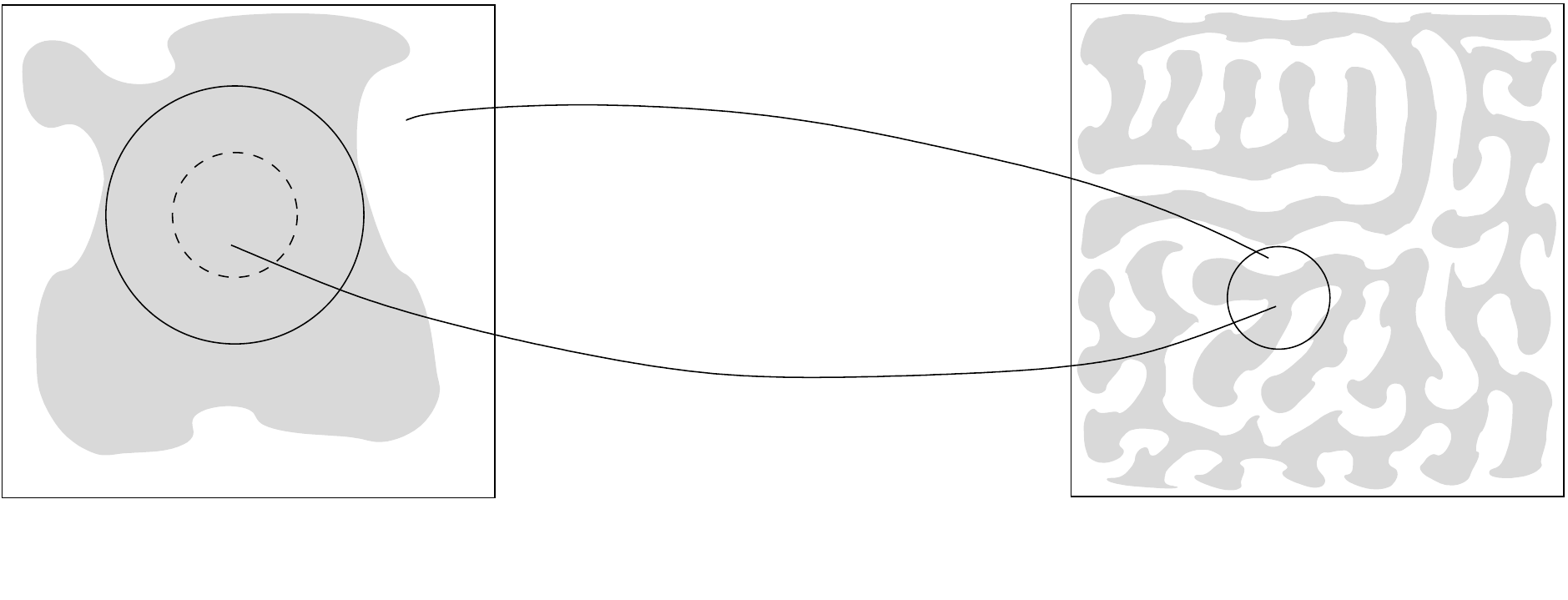tex_t}}
		\end{center}
		\caption{Geometric idea for the proof of Theorem~\ref{yolinski}}
		\end{figure}
		The idea is to take any point $\tilde{x}\in B(x,\frac{r}{2})\cap A$. Due to condition \eqref{conditione}, we know that wherever $\Phi(\tilde{x})$ lands, there exist many points $\Phi(y)$, such that $|\Phi(y)-\Phi(\tilde{x})|\leq \sqrt{2} \lambda$ and $y\in A^c$. Since the ball $B$ is filled entirely with points of the set $A$, this means in particular that $y\in B^c$ and therefore $|y-\tilde{x}|\geq \frac{r}{2}$. Hence, choosing $\lambda\leq \frac{3}{16\sqrt{2}}a$, we have that
		\begin{equation}
			\label{loltofeelgood}
			\begin{split}
			2=\frac{\frac{3}{8}a}{\frac{3}{16}a}\leq\frac{|\tilde{x}-y|}{|\Phi(\tilde{x})-\Phi(y)|}&\leq \sup_{x\neq y}\frac{|\Phi^{-1}(x)-\Phi^{-1}(y)|}{|x-y|}\\
			&=:\Lip\left(\Phi^{-1}\right)\leq \exp\left(\int_{0}^{1}\|\nabla u(s,\cdot)\|_{L^p}\,ds\right)
			\end{split}
		\end{equation}
		and therefore
		\begin{equation*}
			\int_{0}^{1}\|\nabla u(s,\cdot)\|_{L^p}\,ds\geq \log(2)>0.
		\end{equation*}
		This concludes the geometric idea of the proof. \\\\The difference in our setting is that, since $LS_\mathcal{Q}(A)\geq a$, there exists a ball $B$ with radius $\frac{3}{4} a$ which is filled only to a large proportion with points of the set $A$. The other difference is that the Lipschitz constant depends the size of the set $E$ in Theorem \ref{CrippaDeLellis}. The rest of proof is a matter of playing with the constants in order to show that points $\tilde{x}$ and $y$ exist, just as above.\\\\
			Since $A$ is a set of characteristic length scale $a$, by definition there exists a ball $B=B(x,r)\subset\mathcal{Q}$, such that $r\geq\frac{3}{4}a$ and 
			\begin{equation}
			\label{fbfli}
			\frac{|A\cap B|}{|B|}>1-\frac{1-\kappa}{2}\cdot\bar{s}\geq 1-\frac{\bar{s}}{2}.
			\end{equation}	
			We define a ball $\tilde{B}=B(x,\sigma)\subset B$ with the same center as $B$, such that $|\tilde{B}|=(\frac{3+\bar{s}}{4})|B|$. Thus
			\begin{equation}
			|A\cap\tilde{B}|\geq |\tilde{B}|-\frac{\bar{s}}{2}|B|\geq \left(\frac{3+\bar{s}}{4}-\frac{\bar{s}}{2}\right)\pi r^2=\left(\frac{3-\bar{s}}{4}\right)\pi r^2 .
			\end{equation}
			Let $\left\lbrace Q_i\right\rbrace_{i=1}^{1/\lambda^2}$ denote the sub-squares in the tiling $\mathcal{T}_{\lambda}$. For $i=1,\ldots,\frac{1}{\lambda^2}$, denote
			\begin{equation}
			A_i:=\Phi\left(A\cap\tilde{B}\right)\cap Q_i\, .
			\end{equation} 
			Note that the $A_i$'s are disjoint for different $i$, and $\sum_{i=1}^{1/\lambda^2}|A_i|\geq\left(\frac{3-\bar{s}}{4}\right)\pi r^2$.\\
			Let $G_i :=\Phi(A^c)\cap Q_i$. We further decompose each $G_i$ into $G_i^{\text{ good}}$ and $G_i^{\text{ bad}}$, where
			\begin{equation}
			G_i^{\text{ good}}:=\Phi(A^c)\cap Q_i\cap\Phi(B^c) \hspace{0.5cm}\text{ and }\hspace{0.5cm}G_i^{\text{ bad}}:=\Phi(A^c)\cap Q_i\cap\Phi(B).
			\end{equation}
			Recall that \eqref{fbfli} yields that $\sum_{i=1}^{1/\lambda^2}|G_i^{\text{ bad}}|\leq\frac{\bar{s}}{2}\pi r^2 $, which we will use later.\\
			By Theorem \ref{CrippaDeLellis}, for any $\eta>0$ (let us fix $\eta=\frac{1-\bar{s}}{4}\pi r^2$), there exists $E\subset \mathcal{Q}$ with $|E|<\eta$, such that
			\begin{equation}
			\label{ovaso}
			\Lip(\Phi^{-1}|_{E^c})\leq \exp\left(\frac{C}{\eta^{1/p}}\int_{0}^{1}\|\nabla u(s,\cdot)\|_{L^p}\,ds\right)\, .
			\end{equation}
			For such $E$, we claim that there exists some $i\in\lbrace 1,\ldots ,\frac{1}{\lambda^2}\rbrace$, such that \emph{both} $A_i\setminus E$ and $G_i^{\text{ good}}\setminus E$ are nonempty. Once this is proved, taking any $\tilde{x}\in A_i\setminus E$ and $y\in G_i^{\text{ good}}\setminus E$ yields
			\begin{equation}
			\Phi^{-1}(\tilde{x})\in A\cap \tilde{B}\hspace{0.5cm}\text{ and }\hspace{0.5cm}\Phi^{-1}(y)\in A^c\cap B^c
			\end{equation}
			Thus $|\Phi^{-1}(\tilde{x})-\Phi^{-1}(y)|\geq r-\sigma$ (note that $r-\sigma>0$ is a quantity depending on $a$ and $\bar{s}$ only). On the other hand, since $\tilde{x},y\in Q_i$ we have that $|\tilde{x}-y|\leq\sqrt{2}\lambda$. Therefore, as long as $\lambda<\frac{r-\sigma}{2 \sqrt{2}}$ we have that $\Lip(\Phi^{-1}|_{E^c})\geq 2$. Combining this with \eqref{ovaso} concludes the proof.\\\\
			It then remains to prove the claim. For each $i=1,\ldots,\frac{1}{\lambda^2}$, note that
			\begin{equation}
			\begin{split}
			\min\lbrace|A_i\setminus E|,\, |G_i^{\text{ good}}\setminus E|\rbrace& \geq \min\lbrace |A_i|, |G_i|\rbrace-|G_i^{\text{ bad}}|-|Q_i\cap E| \\
			& =  |A_i | - |G_i^{\text{ bad}}| - |Q_i \cap E| \,.
			\end{split}
			\end{equation}
			Summing it up for $i=1,\ldots,\frac{1}{\lambda^2}$ yields
			\begin{equation}
			\sum_{i=1}^{1/\lambda^2}\min\lbrace|A_i\setminus E|,\, |G_i^{\text{ good}}\setminus E|\rbrace\geq \left(\frac{3-\bar{s}}{4}\right)\pi r^2-\frac{\bar{s}}{2}\pi r^2-\frac{1-\bar{s}}{4}\pi r^2=\frac{1-\bar{s}}{2}\pi r^2>0
			\end{equation}
			hence the claim is proved.
				\end{proof}

	\section{Scaling analysis}
	\label{ScalingAnalysis}
	In this chapter we implement \emph{Step 1} part (ii) in the proof of the Main Theorem.
\subsection{Scaling analysis for Sobolev spaces of integer order} In this section we implement the scaling analysis computations for the  case where $s=k\geq 2 $ is an integer and $\lambda$ is a sufficiently small tiling parameter such that by Theorem \ref{yolinski} we have that
	\begin{equation}
	M_{\lambda,a,p}=\inf_{(u_0,\rho_0)\in\mathcal{B}}\left\lbrace \int\limits_{0}^1\|\nabla u_0(t,\cdot)\|_{L^p}\,dt\right\rbrace > 0 \, .
	\end{equation} 
	The computations for fractional Sobolev spaces are in Section \ref{ScalingFrac} and the argument for a general tiling parameter is in the Appendix \ref{Finetiling}. \\\\
 
	We compute $\|\nabla_x^k u(t,\cdot)\|_{L^p(\mathcal{Q})}$ on the time interval $[T_n,T_{n+1})$. Remember that for any $Q\in\mathcal{T}_{\lambda^n}$ the vector field $u$ takes on the form
	\begin{equation*}
		\frac{\lambda^n}{\tau^n}u_Q\left(\frac{t-T_n}{\tau^n},\frac{x-r_Q}{\lambda^n}\right)\, ,
	\end{equation*}
	where $(u_Q,\rho_Q)\in \mathcal{B}$. Using change of variables, the Poincar\'e inequality followed by the Jensen inequality we get
	\begin{equation}
		\label{bigcalc}
		\begin{split}
			\int\limits_{T_n}^{T_{n+1}}\|\nabla^k u(t,\cdot) \|_{L^p(\mathcal{Q})}^p\,dt&=\int\limits_{T_n}^{T_{n+1}}\sum_{Q\in\mathcal{T}_{\lambda^n}}\int\limits_{Q}\left| \nabla_x^k\left(\frac{\lambda^n}{\tau^n}u_Q\left(\frac{t-T_n}{\tau^n},\frac{x-r_Q}{\lambda^n}\right)\right)\right|^p\,dx\,dt\\
			&=\frac{1}{\lambda^{(k-1)np}}\frac{\tau^n}{\tau^{np}}\sum_{Q\in\mathcal{T}_{\lambda^n}}\int\limits_{0}^{1}\int\limits_{Q}\left| \left(\nabla_x^k u_Q \right)\left(t,\frac{x-r_Q}{\lambda^n}\right)\right|^p\,dx\,dt\\
			&=\frac{\tau^n}{(\lambda^{k-1}\tau)^{np}}\lambda^{2n}\sum_{Q\in\mathcal{T}_{\lambda^n}}\int\limits_{0}^{1}\int\limits_{\mathcal{Q}}\left| \left(\nabla_x^k u_Q \right)\left(t,y\right)\right|^p\,dx\,dt\,\\
			&\stackrel{P}{\geq}\frac{C_{p,k}\tau^n}{(\lambda^{k-1}\tau)^{np}}\lambda^{2n}\sum_{Q\in\mathcal{T}_{\lambda^n}}\int\limits_{0}^{1}\int\limits_{\mathcal{Q}}\left| \left(\nabla_x u_Q \right)\left(t,y\right)\right|^p\,dx\,dt\,\\
			&\stackrel{J}{\geq} \frac{C_{p,k}\tau^n}{(\lambda^{k-1}\tau)^{np}}M_{\lambda,a,p}^p\, ,
		\end{split}
	\end{equation}
	where $M_{\lambda,a,p}$ is the minimal cost in \eqref{MinCostner}. Now since $\left|[T_n,T_{n+1}]\right|=\tau^n$, together with \eqref{bigcalc} we get that
	\begin{equation}
		\left\|\|\nabla_x^k u(t,\cdot)\|_{L^p(\mathcal{Q})}^p\right\|_{L_t^{\infty}(T_n,T_{n+1})}\geq \left(\frac{1}{\lambda^{k-1} \tau}\right)^{np} \underbrace{C_{p,k} M_{\lambda,a,p}^p }_{>0}
	\end{equation}
	and hence, since $u\in L_t^\infty(\dot{W}^{s,p})$ we must have that $\frac{1}{\lambda^{k-1}\tau}\leq1$, that is
	\begin{equation}
	 1<\frac{1}{\lambda^{k-1}}\leq \tau\, .
	\end{equation}
	Therefore, for $n$ large enough, we have that
	\begin{equation}
		\label{randooom}
		T_n=\frac{\tau^n-1}{\tau-1}\geq\frac{\lambda^{-(k-1)n}-1}{\lambda^{1-k}-1}\, ,
	\end{equation}
	which is equivalent to
	\begin{equation}
	\label{rudimentari}
	\lambda^n\geq  \left(\frac{1}{T_n(\lambda^{1-k}-1)+1}\right)^{\frac{1}{k-1}}\sim T_n^{-\frac{1}{k-1}}\, .
	\end{equation}
	Combining Lemma \ref{fireworks} and \eqref{rudimentari} we therefore get that
		\begin{equation}
		\label{brolinski}
		\mathcal{G}(\rho(T_n,\cdot))\geq C \lambda^n \sim T_n^{-\frac{1}{k-1}}\hspace{0.5cm}\text{ and }\hspace{0.5cm}\|\rho(T_n,\cdot)\|_{\dot{H}^{-1}}\geq C \lambda^{2n}\sim T_n^{-\frac{2}{k-1}}
		\end{equation}
		for all $n\in\mathbb{N}$. This concludes the proof for the case where $s=k$ is an integer.
		
			\subsection{Scaling analysis for fractional Sobolev spaces}
			\label{ScalingFrac}
			For the case $s=k+r$, where $r\in (0,1)$ and $k\geq 1$ is an integer, we use the following definition of the fractional semi-norm (in two dimensions)
			\begin{equation*}
			\|f\|_{\dot{W}^{s,p}(\mathbb{R}^2)}^p=\iint\limits_{\mathbb{R}^2\times\mathbb{R}^2}\frac{|\nabla^k f(x)-\nabla^k f(y)|^p}{|x-y|^{2+rp}}\,dx\,dy\, ,
			\end{equation*}
			as well as the following fractional Poincar\'e inequality:
		\begin{lemma}
			Let $r\in (0,1)$, $1\leq p < \infty$ and $\mathcal{Q}$ the unit cube (in $\mathbb{R}^2$). Then
			\begin{equation*}
			\int_{\mathcal{Q}}|f(x)-f_{\mathcal{Q}}|^p\,dx\leq C(r,p)\iint\limits_{\mathcal{Q}\times\mathcal{Q}}\frac{| f(x)- f(y)|^p}{|x-y|^{2+rp}}\,dx\,dy
			\end{equation*}
			where $f_{\mathcal{Q}}=\dashint_{\mathcal{Q}}f(y)\,dy$ .
		\end{lemma}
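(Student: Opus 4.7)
My plan is to derive the inequality by the classical Jensen-type argument that converts a difference quotient seminorm on a bounded domain into a Poincaré-style estimate, exploiting the fact that $\mathcal{Q}$ has bounded diameter. Since $|\mathcal{Q}|=1$ and $f_{\mathcal{Q}}=\dashint_{\mathcal{Q}}f(y)\,dy$, for every $x\in\mathcal{Q}$ I can write
\begin{equation*}
f(x)-f_{\mathcal{Q}}=\dashint_{\mathcal{Q}}\bigl(f(x)-f(y)\bigr)\,dy,
\end{equation*}
and then apply Jensen's inequality, raised to the $p$-th power inside the average, to get the pointwise estimate $|f(x)-f_{\mathcal{Q}}|^p\leq \dashint_{\mathcal{Q}}|f(x)-f(y)|^p\,dy$.

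The next step is to insert the weight $|x-y|^{-2-rp}$ artificially. I multiply and divide by $|x-y|^{2+rp}$ inside the average on the right-hand side. Because $\mathcal{Q}$ is the unit square, every pair $x,y\in\mathcal{Q}$ satisfies $|x-y|\leq\sqrt{2}$, so $|x-y|^{2+rp}\leq (\sqrt{2})^{2+rp}=:C(r,p)$. Pulling this constant out leaves
\begin{equation*}
|f(x)-f_{\mathcal{Q}}|^p\leq C(r,p)\dashint_{\mathcal{Q}}\frac{|f(x)-f(y)|^p}{|x-y|^{2+rp}}\,dy.
\end{equation*}
Integrating in $x\in\mathcal{Q}$ (and again using $|\mathcal{Q}|=1$) yields exactly the claimed inequality.

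There is no substantive obstacle here; the key point to be careful about is that the uniform bound is applied to $|x-y|^{2+rp}$ (upper bounded by a constant) and \emph{not} to $|x-y|^{-2-rp}$, so no singular integrals appear and no integrability of the fractional kernel on $\mathcal{Q}\times\mathcal{Q}$ is needed. The same argument works on any bounded domain, with the constant depending on the diameter in place of $\sqrt{2}$.
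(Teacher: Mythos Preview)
Your proof is correct and is essentially identical to the paper's own argument: Jensen's inequality applied to $f(x)-f_{\mathcal{Q}}=\dashint_{\mathcal{Q}}(f(x)-f(y))\,dy$, followed by the observation that $|x-y|^{2+rp}$ is uniformly bounded on $\mathcal{Q}\times\mathcal{Q}$. The paper just writes the same three lines without explicitly naming the constant $(\sqrt{2})^{2+rp}$.
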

		\begin{proof}
			Using the Jensen inequality, we have that
		\begin{align}
		\int_{\mathcal{Q}}|f(x)-f_{\mathcal{Q}}|^p \, dx
		& 
		= \int\limits_{\mathcal{Q}} \left|f(x)- \dashint_{\mathcal{Q}} f(y) dy \right|^p\,dx
		\\ \nonumber
		& \leq \dashint\limits_\mathcal{Q}  \int\limits_\mathcal{Q} | f(x) -  f(y) |^p\, dx dy 
		\\ \nonumber
		& \leq C(r,p) \int\limits_\mathcal{Q}  \int\limits_\mathcal{Q} \frac{| f(x) -  f(y) |^p}{|x-y|^{2+rp}} \, dx dy \, . \qedhere
		\end{align} 
		\end{proof}
			 For the scaling analysis in the fractional case we compute
			\begin{equation*}
			\begin{split}
			\int_{T_n}^{T_{n+1}} \|u(t,\cdot)\|_{\dot{W}^{s,p}(\mathbb{R}^2)}^p\,dt&=\int_{T_n}^{T_{n+1}}\iint\limits_{\mathbb{R}^2\times\mathbb{R}^2}\frac{|\nabla^k u(t,x)-\nabla^k u(t,y)|^p}{|x-y|^{2+rp}}\,dx\,dy\,dt\\
			&\geq \sum\limits_{Q\in\mathcal{T}_{\lambda^{n}}}\int_{T_n}^{T_{n+1}}\iint\limits_{Q\times Q}\frac{|\nabla^k u(t,x)-\nabla^k u(t,y)|^p}{|x-y|^{2+rp}}\,dx\,dy\,dt\\
			&=\frac{\tau^n}{\tau^{np}}\frac{\lambda^{4n}}{\lambda^{(k-1)np}}\sum\limits_{Q\in\mathcal{T}_{\lambda^{n}}}\int_{0}^{1}\iint\limits_{\mathcal{Q}\times\mathcal{Q}}\frac{|\nabla^k u_Q(t,x)-\nabla^k u_Q(t,y)|^p}{\left|\lambda^n(x-y)\right|^{2+rp}}\,dx\,dy\,dt\\
			&\stackrel{P}{\geq} C\frac{\tau^n}{\tau^{np}}\frac{1}{\lambda^{(k-1+r)np}} \lambda^{2n} \sum\limits_{Q\in\mathcal{T}_{\lambda^{n}}}\int_{0}^{1}\int_{\mathcal{Q}}\left|\nabla u_Q(t,y) \right|^p\,dy\,dt\\
			&\stackrel{J}{\geq} C\frac{\tau^n}{\tau^{np}}\frac{1}{\lambda^{(k-1+r)np}}M_{\lambda,a,p}^p\\
			&=C\frac{\tau^n}{\tau^{np}}\frac{1}{\lambda^{(s-1)np}}M_{\lambda,a,p}^p\, .
			\end{split}
			\end{equation*}
			Hence we get that
			\begin{equation*}
			\left\|\|u(t,\cdot)\|_{\dot{W}^{s,p}(\mathcal{Q})}^p\right\|_{L_t^\infty(T_n,T_{n+1})}\geq\left(\frac{1}{\lambda^{(s-1)}\tau}\right)^{np} C M_{\lambda,a,p}^p
			\end{equation*}
			and therefore, if we ask of $\|u(t,\cdot)\|_{\dot{W}^{s,p}(\mathcal{Q})}^p$ to be uniformly bounded in time, we need that
			\begin{equation*}
			1<\left(\frac{1}{\lambda}\right)^{s-1}\leq \tau\, .
			\end{equation*}
		We can therefore conclude as in the integer case, that:
			\begin{equation}
			\label{ggn}
			\mathcal{G}(\rho(T_n,\cdot))\geq C_2 \lambda^n \sim T_n^{-\frac{1}{s-1}}\hspace{0.5cm}\text{ and }\hspace{0.5cm}\|\rho(T_n,\cdot)\|_{\dot{H}^{-1}}\geq \tilde{C}_2 \lambda^{2n}\sim T_n^{-\frac{2}{s-1}}\, .
			\end{equation}

	\appendix
	\section{Proof of Lemmas \ref{tilingmixing} and \ref{fireworks}, Fine tiling argument }
	\label{ExtraStuff}
In this chapter of the Appendix we collect the proofs of the auxiliary Lemmas \ref{tilingmixing} and \ref{fireworks}, as well as the proof of the reduction in \emph{Step 2} in the proof of the Main Theorem.
	\subsection{Proof of Lemma \ref{tilingmixing}}
	\label{L1Proof}
	\begin{proof}
		$(i)$ We extend the tiling $\mathcal{T}_{\lambda}$ on $\mathcal{Q}$ to a tiling $\tilde{\mathcal{T}}_{\lambda}$ on the entire plane $\mathbb{R}^2$ (with tiles of side $\lambda$). Since $\rho$ is identically zero outside of $\mathcal{Q}$, condition \eqref{dunix} is fulfilled by all $Q\in\tilde{\mathcal{T}}_{\lambda}$. The rest of the argument is precisely Lemma 3.5 in \cite{AlbCrippa} with $C_1=\frac{4\sqrt{2}}{\kappa}$.\\\\
		$(ii)$ We use the Poincar\'e inequality
		\begin{equation*}
			\|u-u_Q\|_{L^p(Q)}\leq C\lambda\|\nabla u\|_{L^p(Q)}
		\end{equation*}
		where $Q\in \mathcal{T}_\lambda $, $u_Q=\dashint_Q u$ and $C=C(p)$ as well as the definition of the $\dot{H}^{-1}$ norm via duality
		\begin{equation}
			\label{anotherway}
			\|\rho\|_{\dot{H}^{-1}(\mathbb{R}^2)}=\sup\left\lbrace \int_{\mathbb{R}^2}\rho(x)\xi(x)\,dx \,:\, \|\nabla \xi \|_{L^2(\mathbb{R}^2)}\leq 1 \right\rbrace \, .
		\end{equation}
		Now let $\xi$ such that $\|\nabla \xi\|_{L^2}\leq 1$ be given. Then
		\begin{equation}
			\label{yolin}
			\begin{split}
				\left|\int_{\mathbb{R}^2}\rho(x)\xi(x)\,dx\right|&\leq \sum_{Q\in\mathcal{T}_\lambda}\left|\int_{Q}\rho(x)\xi(x)\,dx \right|\\
				&\leq\sum_{Q\in\mathcal{T}_\lambda}\int_{Q}\left|\rho(x)(\xi(x)-\xi_Q)\right|\,dx +\left|\int_{Q}\rho(x)\xi_Q\,dx\right|\\
				&\leq\|\rho\|_{\infty}\sum_{Q\in\mathcal{T}_\lambda}\|\xi-\xi_Q\|_{L^1(Q)}\\
				&\leq C\|\rho\|_{\infty}\lambda \sum_{Q\in\mathcal{T}_\lambda}\|\nabla \xi\|_{L^1(Q)}\\
				&= C\|\rho\|_{\infty}\lambda \|\nabla \xi\|_{L^1(\mathcal{Q})}\leq C\|\rho\|_{\infty}\lambda \|\nabla \xi\|_{L^2(\mathcal{Q})}\\
				&\leq C\|\rho\|_{\infty}\lambda \, .
			\end{split}
		\end{equation}
		Combining \eqref{yolin} with \eqref{anotherway}, we can choose $C_2=C\|\rho\|_{\infty}$.
	\end{proof}
	\subsection{Proof of Lemma \ref{fireworks} }
	\label{birdman}
	\begin{proof}
					 (i) By the definition of the characteristic length scale, there exists a ball $B(x,r)\subset \mathcal{Q}$ with $r\geq \frac{3}{4}a\lambda^n$ such that
			\begin{equation}
			\frac{|B_{Q,n}\cap B(x,r)|}{|B(x,r)|}>1-\frac{1-\kappa}{2}\cdot \bar{s} \,.
			\end{equation}
		Since $B(x,r)$ violates condition \eqref{AggroBerlin} of the geometric mixing scale, we have that
		\begin{equation*}
\mathcal{G}(\rho(T_n,\cdot))\geq C \lambda^n a\, .
		\end{equation*}
		(ii) We showed in (i) that there exists a ball $B(x,r)\subset \mathcal{Q}$ where $r\geq \frac{3}{4}a\lambda^n$ such that
				\begin{equation}
				\frac{1}{\|\rho\|_{\infty}}\left|\,\dashint_{B(x,r)}\rho(T_n,y)\,dy\right|> \kappa \, .
				\end{equation}
		
			We can define a smooth test function (see Lemma A.1 in \cite{Yao} or Lemma 2.3 in \cite{Kiselev}) $g:\mathbb{R}^2\to [0,1]$ such that
		\begin{equation*}
		g=\begin{cases}1 & \mbox{on }B(x,r)\\ 0 & \mbox{outside }B(x,r (1+\frac{\kappa}{20}))\end{cases}
		\end{equation*}
		such that $\|g\|_{\dot{H}^1}\leq \frac{C_1}{\sqrt{\kappa}}$. Hence
		\begin{equation*}
		\left|\int_{\mathbb{R}^2}\rho(t, y)g(y)\,dy\right|\geq \left|\int_{B(x,r)}\rho(t,y)g(y)\,dy\right|-\frac{3\pi\kappa r^2}{20}\|\rho\|_{\infty}\geq \|\rho\|_{\infty} \pi\kappa r^2\left(1-\frac{3}{20}\right)
		\end{equation*}
		and therefore
		\begin{equation*}
		\|\rho(t,\cdot)\|_{\dot{H}^{-1}}\geq \frac{1}{\|g\|_{\dot{H}^1}} \left|\int_{\mathbb{R}^2}\rho(t,y)g(y)\,dy\right|\geq C(\kappa,\|\rho\|_{\infty}) r^2\geq C(\kappa,\|\rho\|_{\infty}) \lambda^{2n}a\,.
		\end{equation*}
		In our case we set $\|\rho\|_{\infty}=1$, and $\kappa$ is fixed, so $C(\kappa,\|\rho\|_{\infty})=C$.
	\end{proof}

		\subsection{Fine-tiling argument}
		\label{Finetiling}
		Let a divergence-free velocity field $u\in L_t^\infty(\dot{W}^{s,p})$ where $s>1$ with corresponding solution $\rho$ of cellular type be given as in the Main Theorem, with a tiling parameter $\frac{1}{2}\geq\lambda\geq C(a,\bar s)$, a time parameter $\tau>0$ and corresponding time steps
		\begin{equation*}
		T_n=\sum_{i=0}^{n-1}\tau^i\,.
		\end{equation*}
		We first note that there exists an $l\in\mathbb{N}$ such that		
		\begin{equation*}
		0<\tilde{\lambda}:=\lambda^l<C(a,\bar s) \,.
		\end{equation*}	
		\noindent
		\textbf{Claim 1.} Under the above assumptions, we have that $\tau\neq 1$.
		\begin{proof}
			Let us assume that $\tau=1$. Then the rescaled pair
			\begin{equation*}
			\tilde{u}(t,x)=l u\left(lt,x\right)\hspace{0.5cm}\textnormal{and}\hspace{0.5cm}\tilde{\rho}\left(t,x\right)=\rho\left(lt,x\right)
			\end{equation*}
			is of cellular type with $\tau=1$ and tiling parameter $0<\lambda^l<C(a,\bar s)$. Also, clearly $u\in L_t^\infty(\dot{W}^{s,p})$. This is a contradiction to the first part of the proof of the Main Theorem (where we assumed a sufficiently fine tiling).
		\end{proof}
		\noindent
		\textbf{Claim 2.}
		By the above Claim we have that $\tau\neq 1$. The rescaled pair 
		\begin{equation}
		\label{Zulu}
		\tilde{u}(t,x)=C u\left(Ct,x\right)\hspace{0.5cm}\textnormal{and}\hspace{0.5cm}\tilde{\rho}\left(t,x\right)=\rho\left(Ct,x\right),\hspace{0.4cm}\textnormal{where }\hspace{0.4cm}C=\frac{1-\tau^l}{1-\tau}
		\end{equation}
		is also of cellular type with a tiling parameter $\tilde{\lambda}=\lambda^l$ and time steps
		\begin{equation*}
		\tilde{T}_n=\sum_{i=0}^{n-1}\tilde{\tau}^i\, ,
		\end{equation*}
		where $\tilde{\tau}=\tau^l$.
		\begin{proof}
			This follows directly from the identity
			\begin{equation}
			\label{bigleb}
			C\tilde{T}_n=T_{nl}\hspace{0.75cm}\textnormal{ for } n=1,2,\ldots,\infty
			\end{equation}
			which we verify by induction:
			\begin{enumerate}
				\item $n=1$: 
				\begin{equation*}
				C\tilde{T}_1=C\cdot 1=\frac{1-\tau^l}{1-\tau}=T_l
				\end{equation*}
				\item $n\to n+1$:
				\begin{equation*}
				\begin{split}
				C\tilde{T}_{n+1}=C\tilde{T}_{n}+C\tilde{\tau}^{n}=T_{nl}+\frac{1-\tau^l}{1-\tau}\tau^{nl}&=\frac{1-\tau^{nl}+\tau^{nl}-\tau^{(n+1)l}}{1-\tau}\\
				&=\frac{1-\tau^{(n+1)l}}{1-\tau}=T_{(n+1)l}\, .
				\end{split}
				\end{equation*}
			\end{enumerate}
			Using identity \eqref{bigleb}, we verify that for all $n=1,2,\ldots$ we have that
			\begin{equation}
			\int_{Q}\tilde{\rho}(\tilde{T}_n,y)\,dy=\int_{Q}\rho(C\tilde{T}_n,y)\,dy=\int_{Q}\rho(T_{nl},y)\, dy=0
			\end{equation}
			for all $Q\in \mathcal{T}_{\lambda^{ln}}=\mathcal{T}_{{\tilde{\lambda}}^n}$.
			And similarly, for all $Q\in \mathcal{T}_{\tilde{\lambda}^n}=\mathcal{T}_{\lambda^{nl}}$ we have that
			\begin{equation}
			\tilde{\rho}(\tilde{T}_n,\cdot)|_Q=\rho(T_{nl},\cdot)|_Q=\begin{cases}1 & \mbox{on }B\\ -1 & \mbox{else , }\end{cases}
			\end{equation}
			where $B$ is of is a set of characteristic length scale greater or equal to $\lambda^{nl}a=\tilde{\lambda}^{n}a$.
		\end{proof}
		Since $\tilde{u}\in L_t^\infty(\dot{W}^{s,p})$ with corresponding solution $\tilde{\rho}$ is of cellular type with fine enough tiling as in \eqref{lambdachoice}, we can use the scaling result \eqref{brolinski}  and \eqref{ggn} to conclude 
		\begin{equation}
		\mathcal{G}(\tilde{\rho}(\tilde{T}_n,\cdot))\geq  \tilde{T}_n^{-\frac{1}{s-1}}\hspace{0.5cm}\text{ and }\hspace{0.5cm}\|\tilde{\rho}(\tilde{T}_n,\cdot)\|_{\dot{H}^{-1}}\geq  \tilde{T}_n^{-\frac{2}{s-1}}\, .
		\end{equation}
		Therefore by definition \eqref{Zulu}, for all time steps $\bar{T}_n=C\tilde{T}_n$ we have that
		\begin{equation*}
		\mathcal{G}(\rho(\bar{T}_n,\cdot))=	\mathcal{G}(\rho(C\tilde{T}_n,\cdot))=\mathcal{G}(\tilde{\rho}(\tilde{T}_n,\cdot))\geq  \tilde{T}_n^{-\frac{1}{s-1}}=\bar{T}_n^{-\frac{1}{s-1}}C^{\frac{1}{s-1}}\geq\bar{T}_n^{-\frac{1}{s-1}}
		\end{equation*}
		since $C>1$. Similarly 
		\begin{equation*}
		\|\rho(\bar{T}_n,\cdot)\|_{\dot{H}^{-1}}\geq  \bar{T}_n^{-\frac{2}{s-1}}\, .
		\end{equation*}

\section{Polynomial decay of certain cellular flows}
\label{grischun}
Using an ansatz of quasi self-similarity, the authors of \cite{AlbCrippa} construct explicit examples of a velocity fields with uniform in time bounds on the $\dot{W}^{s,p}$ norms (where $s>1$), which mix at a polynomial rate. Quasi self-similar velocity fields can be viewed as velocity fields of cellular type with a \textit{finite family} of basic building blocks. For completeness, as a simple generalization of the computations in Chapter 6 in \cite{AlbCrippa}, we show that any velocity field of cellular type with certain uniform bounds mix at polynomial rate. Some of those bounds entail regularity of order $\lceil s \rceil$, rather than $s$, where $\lceil s\rceil$ is the smallest integer greater or equal than $s$. This is due to the fact that Sobolev norms of integer order are local, which is a property used in the proof of Lemma 6.5 in \cite{AlbCrippa}.
\begin{theorem}
Let a divergence-free velocity field $u$ with corresponding solution $\rho$ of cellular type as in the Main Theorem be given. We denote by $\mathcal{A}\subset \mathcal{B}$ the set of all basic building blocks used to patch together $u$ and $\rho$, i.e. $(u_0,\rho_0)\in\mathcal{A}$ if and only if there exists an $n\in\mathbb{N}$ such that for some $Q\in\mathcal{T}_{\lambda^n}$ on $T_n\leq t\leq T_{n+1}$ we have
		\begin{equation*}
		u(t,x)=\frac{\lambda^n}{\tau^n}u_0\left(\frac{t-T_n}{\tau^n},\frac{x-r_Q}{\lambda^n}\right) \hspace{0.5cm}\textnormal{and}\hspace{0.5cm} \rho(t,x)=\rho_0\left(\frac{t-T_n}{\tau^n},\frac{x-r_Q}{\lambda^n}\right)\, .
		\end{equation*}
Under the assumption that for some $s>1$
\begin{equation}
\sup_{(u_0,\rho_0)\in\mathcal{A}}\sup_{0\leq r \leq 1}\|u_0(r,\cdot)\|_{\dot{W}^{\lceil s \rceil,p}(\mathbb{R}^2)}=C<\infty
\end{equation}
as well as the additional assumption that for a.e. $t\geq0$ we have that $u(t,\cdot)\in\dot{W}^{\lceil s \rceil,p}(\mathbb{R}^2)$, we can set  $\tau=\lambda^{1-s}$ which ensures that $u\in L^{\infty}(\dot{W}^{s,p})$ and that both the functional and the geometric mixing scale have the following \textit{polynomial} decay:
			\begin{equation}
			\mathcal{G}(\rho(t,\cdot))\leq C t^{-\frac{1}{s-1}}\hspace{0.5cm}\text{ and }\hspace{0.5cm}\|\rho(t,\cdot)\|_{\dot{H}^{-1}(\mathbb{R}^2)}\leq Ct^{-\frac{1}{s-1}}\, .
			\end{equation}

\end{theorem}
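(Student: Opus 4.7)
The plan is to adapt the scaling analysis of the Main Theorem, reading the inequalities in the opposite direction: where the lower bound on the minimal cost $M_{\lambda,a,p}$ forced $\tau$ to be large, we now use the assumed uniform upper bound $\sup_{\mathcal{A}}\|u_0\|_{\dot{W}^{\lceil s\rceil,p}} \leq C$ to produce an upper bound on the Sobolev norm of $u$. The critical choice $\tau=\lambda^{1-s}$ is precisely the one that makes the scaling factor $\lambda^{s-1}\tau$ equal to $1$; since $s>1$ and $\lambda<1$, this gives $\tau>1$, so the time steps grow geometrically.

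First I would verify $u \in L^\infty_t(\dot{W}^{s,p})$. On each interval $[T_n,T_{n+1}]$, running the same change of variables as in \eqref{bigcalc} but with the supremum replacing the infimum yields, for every integer $k \leq \lceil s \rceil$,
\begin{equation*}
\|\nabla^k u(t,\cdot)\|_{L^p(\mathcal{Q})} \leq \frac{C}{(\lambda^{k-1}\tau)^n} = C\,\lambda^{(s-k)n}.
\end{equation*}
When $s=k$ is an integer, this already gives a uniform bound. When $s=k+r$ with $r\in(0,1)$, the $\dot{W}^{k,p}$-bound decays like $\lambda^{rn}$ while the $\dot{W}^{k+1,p}$-bound grows like $\lambda^{(r-1)n}$, and neither is individually useful. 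The a priori hypothesis $u(t,\cdot)\in\dot{W}^{\lceil s\rceil,p}(\mathbb{R}^2)$ allows me to invoke the standard interpolation inequality for fractional Sobolev spaces (valid for $1<p<\infty$)
\begin{equation*}
\|u\|_{\dot{W}^{s,p}(\mathbb{R}^2)} \leq C\,\|u\|_{\dot{W}^{k,p}(\mathbb{R}^2)}^{1-r}\,\|u\|_{\dot{W}^{k+1,p}(\mathbb{R}^2)}^{r},
\end{equation*}
and combining the two bounds gives $\|u(t,\cdot)\|_{\dot{W}^{s,p}} \leq C$ uniformly in $n$, since the exponents satisfy $rn(1-r)+(r-1)rn=0$.

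Next I would translate the cellular structure into mixing scale bounds. At $t=T_n$, properties \eqref{equaldistri} and \eqref{jeffwecan} ensure that $\dashint_Q \rho(T_n,\cdot)=0$ for every $Q\in\mathcal{T}_{\lambda^n}$. Moreover, since every basic block $u_0$ vanishes on $\partial\mathcal{Q}$, the rescaled velocity field vanishes on each tile boundary throughout $[T_n,T_{n+1}]$, so incompressibility preserves the mean of $\rho$ on each tile of $\mathcal{T}_{\lambda^n}$ for the whole interval. Lemma~\ref{tilingmixing} then yields
\begin{equation*}
\mathcal{G}(\rho(t,\cdot)) \leq C_1\lambda^n, \qquad \|\rho(t,\cdot)\|_{\dot{H}^{-1}(\mathbb{R}^2)} \leq C_2\lambda^n
\end{equation*}
uniformly in $t\in[T_n,T_{n+1}]$. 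Finally, $\tau>1$ implies $T_n\sim\tau^n=\lambda^{(1-s)n}$ for large $n$, hence $\lambda^n\sim T_n^{-1/(s-1)}$, and since $t\asymp T_n$ on $[T_n,T_{n+1}]$ (as $\tau$ is a fixed constant), the claimed polynomial decay extends to all $t\geq 0$.

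The only genuinely non-routine step is the fractional Sobolev interpolation in Step~1; I expect this to be the main obstacle, though it is standard once the critical exponents are identified. The remainder is a direct rereading, with inequalities flipped, of the scaling computations in Sections~\ref{ScalingAnalysis} and \ref{ScalingFrac}, combined with a straightforward application of Lemma~\ref{tilingmixing} enabled by the tile-confinement built into the definition of a cellular flow.
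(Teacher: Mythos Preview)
Your proposal is correct and follows essentially the same route as the paper: the paper defers the bound $u\in L^\infty_t(\dot W^{s,p})$ to Lemma~6.5 of \cite{AlbCrippa}, whose proof (as the paper's remark about locality of integer-order norms indicates) is precisely the tile-wise scaling at integer orders $k=\lfloor s\rfloor$ and $k+1=\lceil s\rceil$ followed by interpolation that you write out, and the mixing-scale part via Lemma~\ref{tilingmixing} and $\lambda^n\sim T_n^{-1/(s-1)}$ is identical. One small clarification: the a~priori hypothesis $u(t,\cdot)\in\dot W^{\lceil s\rceil,p}(\mathbb R^2)$ is needed not so much to ``invoke'' interpolation as to guarantee that the distributional derivative $\nabla^{\lceil s\rceil}u$ carries no singular contribution on the tile boundaries, so that your tile-wise change of variables really computes the full $\dot W^{\lceil s\rceil,p}(\mathbb R^2)$ norm.
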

\begin{proof}
	The fact that under the assumptions of the Theorem choosing $\tau=\lambda^{1-s}$ will ensure that $u\in L^{\infty}(\dot{W}^{s,p})$ is a simple generalization of Lemma 6.5 in \cite{AlbCrippa}.\\\\
	As for the decay of the mixing norms, by the definition of a cellular flow and Lemma \ref{tilingmixing} we have that
		\begin{equation}
		\label{boomboogy}
		\mathcal{G}(\rho(t,\cdot))\leq C \lambda^n\hspace{0.5cm}\text{ and }\hspace{0.5cm}\|\rho(t,\cdot)\|_{\dot{H}^{-1}}\leq C \lambda^{n}
		\end{equation}
		for all $T_n\leq t\leq T_{n+1}$. For $t\in [T_n,T_{n+1})$ we have that
	\begin{equation*}
	t<T_{n+1}=\frac{\tau^{n+1}-1}{\tau-1}=\frac{\lambda^{-(s-1)(n+1)}-1}{\lambda^{1-s}-1}\, ,
	\end{equation*}
	which is equivalent to
	\begin{equation}
	\label{streetslon}
	\lambda^n< \left(\frac{1}{\lambda^{s-1}(t(\lambda^{1-s}-1)+1)}\right)^{\frac{1}{s-1}}\sim C(\lambda,s)t^{-\frac{1}{s-1}}\, . 
	\end{equation}
Therefore combining \eqref{boomboogy} and \eqref{streetslon} we obtain 
	\begin{equation}
	\mathcal{G}(\rho(t,\cdot))\leq Ct^{-\frac{1}{s-1}} \hspace{0.5cm}\text{ and }\hspace{0.5cm}\|\rho(t,\cdot)\|_{\dot{H}^{-1}}\leq Ct^{-\frac{1}{s-1}} \, . \qedhere
	\end{equation}
\end{proof}
	\section{Remarks on the universality of a mixer}
	\label{brent}
	\begin{definition}
	[Universal Mixer]
	We call a divergence-free velocity field $u$ a \textit{universal mixer}, if for any bounded, mean-free initial datum $\bar{\rho}$, for the the corresponding solution $\rho$ we have that
		\begin{equation}
		\mathcal{G}(\rho(t,\cdot))\to 0 \hspace{0.5cm}\text{as well as}\hspace{0.5cm}\|\rho(t,\cdot)\|_{\dot{H}^{-1}}\to 0
		\end{equation}
		as $t\to \infty$.
	\end{definition}
	The fact that a cellular flow cannot be a universal mixer can be seen by the following simple argument. For any cellular flow $u$, there exists a sub-square $Q\subset\subset \mathcal{Q}$ and $T>0$, such that $u(t,x)\cdot\nu(x)=0 $ on $\partial Q$ for all $t>T$, where $\nu(x)$ is the outer normal of $Q$. Thus if we choose $\rho_0$ such that $\rho(T,x)\equiv 1$ in $Q$, we would have $\rho(t,x)\equiv 1$ in $Q$ for all $t>T$, meaning that $u$ does not mix such $\rho_0$ as $t\to\infty$.\\\\
	In the following lemma we generalize this observation to a larger class of velocity fields. Note for instance that while the example in \cite{Lin} is not of cellular type, it still belongs to the larger class we are considering in Lemma \ref{dillaking}.
\begin{lemma}
	\label{dillaking}
	Let $u$ be a divergence-free velocity field with corresponding flow map $X$. For any $t>0$ we define
	\begin{equation}
	a(t):= \sup_{x\in\mathbb{R}^2}\left\lbrace\inf_{r}\left\lbrace X(s,x)\in B(X(t,x),r)\textnormal{ for all }s\geq t\right\rbrace\right\rbrace \in[0,+\infty]\,.
	\end{equation}
	If $a(t)\to 0$ as $t\to\infty$, then $u$ is not a universal mixer.
\end{lemma}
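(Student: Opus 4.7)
The plan is to exhibit one specific bounded, mean-free initial datum $\bar\rho$ whose geometric mixing scale stays bounded away from zero for all large times, hence showing that $u$ cannot be a universal mixer. The rough idea is to ``plant'' a large blob of tracer at some late time $t_0$ and then use the smallness of $a(t_0)$ to prove the blob cannot break apart.

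First, I would exploit the hypothesis $a(t)\to 0$ to fix a time $t_0$ with $a(t_0)\leq\epsilon$, where $\epsilon>0$ is a small absolute constant (say $\epsilon<1/16$). Choose a ball $B:=B(0,R)\subset\mathcal{Q}$ with a fixed radius $R$ (e.g.\ $R=1/4$), and set $A:=X(t_0,\cdot)^{-1}(B)$. Because $u$ is divergence-free on the invariant domain $\mathcal{Q}$, the flow $X(t_0,\cdot)$ is a measure-preserving bijection of $\mathcal{Q}$, so $|A|=|B|=\pi R^{2}$. Define the initial datum
\begin{equation*}
\bar\rho:=\goodchi_A-c,\qquad c:=|A|/|\mathcal{Q}|,
\end{equation*}
which is bounded, mean-free on $\mathcal{Q}$ and satisfies $\|\bar\rho\|_\infty=1-c$.

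The key geometric step is to show that for every $s\geq t_0$,
\begin{equation*}
B(0,R-\epsilon)\;\subset\; X(s,A).
\end{equation*}
Given $y\in B(0,R-\epsilon)$, let $x_y:=X(s,\cdot)^{-1}(y)$, so that $X(s,x_y)=y$. By the definition of $a(t_0)$ and the hypothesis $a(t_0)\leq\epsilon$, the trajectory starting at $x_y$ satisfies $|X(s,x_y)-X(t_0,x_y)|\leq\epsilon$, i.e.\ $X(t_0,x_y)\in B(y,\epsilon)\subset B(0,R)=B$. Hence $x_y\in A$ and so $y\in X(s,A)$. Consequently the pushforward identity $\rho(s,\cdot)=\bar\rho\circ X(s,\cdot)^{-1}$ gives $\rho(s,\cdot)\equiv 1-c$ on $B(0,R-\epsilon)$ for every $s\geq t_0$.

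With the tracer uniformly equal to its $L^\infty$-norm on a ball of fixed radius at all late times, the rest is immediate. For the geometric mixing scale, any ball $B(0,\epsilon')$ with $\epsilon'<R-\epsilon$ sits inside $B(0,R-\epsilon)$, has average equal to $1-c=\|\rho(s,\cdot)\|_\infty$, and therefore violates \eqref{AggroBerlin}; hence $\mathcal{G}(\rho(s,\cdot))\geq R-\epsilon>0$ for all $s\geq t_0$. For the functional mixing scale, I would reuse the test-function argument from Lemma~\ref{fireworks}(ii): choosing a smooth bump $g$ with $g\equiv 1$ on $B(0,R-\epsilon)$, vanishing outside a slightly larger ball, and with $\|g\|_{\dot H^{1}}$ controlled by a constant depending only on $R$ and $\epsilon$, pairing with $\rho(s,\cdot)$ yields a uniform lower bound on $\|\rho(s,\cdot)\|_{\dot H^{-1}}$ for all $s\geq t_0$. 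Together these two bounds contradict universal mixing.

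The only conceptually delicate point is the containment $B(0,R-\epsilon)\subset X(s,A)$: a priori the hypothesis $a(t_0)\leq\epsilon$ only controls excursions \emph{outward} from $X(t_0,\cdot)$, so one might expect a mere inclusion $X(s,A)\subset B(0,R+\epsilon)$, which by incompressibility leaves room for $X(s,A)$ to be shaped like a thin annular intrusion and not cover any fixed ball. The trick is to run the argument backwards along the trajectory of each $y\in B(0,R-\epsilon)$ rather than forward from $A$; the confinement then automatically forces the preimage into $A$, using no further information on the flow. Once this containment is secured the proof is essentially quantitative bookkeeping.
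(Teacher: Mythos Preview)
Your proposal is correct and follows essentially the same strategy as the paper's proof: fix a late time $t_0$ with $a(t_0)$ small, prescribe the solution at time $t_0$ to equal its maximum on a fixed region (the paper uses the lower half of the square with values $\pm 1$, you use a ball with values $1-c$ and $-c$), pull this back via the flow to define $\bar\rho$, and then use the confinement bound $|X(s,w)-X(t_0,w)|\le a(t_0)$ applied to the \emph{preimage} $w=X(s,\cdot)^{-1}(y)$ to show that a slightly shrunk region stays saturated for all $s\ge t_0$. Your write-up is in fact more explicit than the paper's about why one must trace the trajectory backward from $y$ rather than forward from $A$, and you spell out the $\dot H^{-1}$ bound via a bump test function where the paper simply asserts that both scales fail to vanish.
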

\begin{remark}
	Note that any velocity field of cellular type fulfills the condition of Lemma \ref{dillaking}, since for any $T_n\leq t <T_{n+1}$, we have that $a(t)\leq \sqrt{2}\lambda^n$.
\end{remark}
\begin{proof}
	Since $a(t)\to 0$, we can choose $t^*$ such that $|a(t)|\leq \frac{1}{100}$ for all $t\geq t^*$. We denote by $A$ the lower half of the unit square $\mathcal{S}=[0,1]^2$:
	\begin{equation}
	A=\left\lbrace (x_1,x_2)\in \mathbb{R}^2\, | \,0\leq x_1 \leq 1 \textnormal{ and }0\leq x_2 \leq \frac{1}{2} \right\rbrace\, .
	\end{equation}
	We implicitly define the initial data
	\begin{equation}
	\rho_0(x)=\begin{cases} 1 & \mbox{if } x\in X^{-1}(t^*,A)\\ -1 & \mbox{if }x\in X^{-1}(t^*,\mathcal{S}\setminus A)\\
	0 &\mbox{else }\end{cases}
	\end{equation}
	and note that $\rho_0$ is mean free, and (by definition) it is precisely the initial condition, for which the solution $\rho$ fulfills
	\begin{equation}
	\rho(t^*,x)=\begin{cases} 1 & \mbox{if } x\in A\\ -1 & \mbox{if }x\in \mathcal{S}\setminus A\\
	0 &\mbox{else. }\end{cases}
	\end{equation}
	Hence, if we choose $x$ in the center of $A$ (so $x=\left(\frac{1}{2},\frac{1}{4}\right)$), since $\dist(B(x,1/8), \mathbb{R}^2\setminus A)=1/8>1/100$ we note that
	\begin{equation}
	\rho\left(t,B(x,1/8)\right)=1
	\end{equation}
	for all $t\geq t^*$ and hence both mixing scales will not go to zero.
\end{proof}


\begin{thebibliography}{56}
		
		
		
			\bibitem{AlbCrippaCras}
			G. Alberti, G. Crippa and A. Mazzucato. Exponential self-similar mixing and loss of regularity for continuity equations.
			\textit{C.R. Math. Acad. Sci. Paris, }
			252 (2014) no. 11, 901-906.
		
		\bibitem{AlbCrippa}
		G. Alberti, G. Crippa and A. Mazzucato. Exponential self-similar mixing by incompressible flows.
		2016. Preprint. arXiv:1605.02090.
		
			\bibitem{Ambro}
			L. Ambrosio. Transport equation and Cauchy problem for BV vector fields.
			\textit{Invent. Math., }158(2):227-260, 2004.
		
		
				\bibitem{Bressan}
				A. Bressan.
				A lemma and a conjecture on the cost of rearrangements.
				\textit{Ren. Sem. Mat. Univ. Padova,}
				110:97-102, 2003.
		
			\bibitem{LellisCrippa}
			G. Crippa and C. De Lellis.
			Estimates and regularity results for the DiPerna Lions flow.
			\textit{J. Reine Angew. Math.,} 616:15-46, 2008.
			
							\bibitem{EulerMix}
			G. Crippa, R. Lucà and C. Schulze.
			Polynomial mixing under a certain stationary Euler flow.
			Paper in preparation.
		
			\bibitem{Depauw}
			N. Depauw.
			Non unicit\'e des solutions born\'ees pour un champ de vecteurs BV en dehors d'un hyperplan.
			\textit{C.R. Math. Acad. Sci. Paris,} 337(4):249-252, 2003.
			
				\bibitem{Diperna}
				R. J. DiPerna and P.-L. Lions.
				Ordinary differential equations, transport theory and Sobolev spaces.
				\textit{Invent. Math., }98(3):511-547, 1989.
			
				\bibitem{Jabin}
				P.-E. Jabin. Critical non Sobolev regularity for continuity equations with rough velocity fields.
				\textit{J. Diff. Equ., } 260:4739-4757, 2016.
	
		
		\bibitem{Kiselev}
		G. Iyer, A. Kiselev and X. Xu. Lower bounds on the mix norm of passive scalars advected by incompressible enstrophy-constrained flows.
		\textit{Nonlinearity, } 27(5):973-985, 2014.
		
				\bibitem{leger}
				F. L\'eger.
				A new approach to bounds on mixing.
				Preprint. arXiv:1604.00907.
		
		
		\bibitem{LinThif}
		Z. Lin, J.-L. Thiffeault and C.R. Doering.
		Optimal stirring strategies for passive scalar mixing.
		\textit{J. Fluid Mech., }675:465-476, 2011.
		
	
		
		\bibitem{Lin}
		E. Lunasin, Z. Lin, A. Novikov, A. Mazzucato, and C.R. Doering.
		Optimal mixing and optimal stirring for fixed energy, fixed power, or fixed palenstrophy flows.
		\textit{J. Math. Phys., }53(11):115611, 2012.
		
		
			\bibitem{Multi}
			G. Mathew, I. Mezi\'c and L. Petzold.
			A multiscale measure for mixing.
			\textit{Phys. D,} 211(1-2):23-46, 2005.
		
		

		
		\bibitem{Seis}
		C. Seis.
		Maximal mixing by incompressible fluid flows.
		\textit{Nonlinearity,}
		26(12):3279-3289, 2013.
		
	
		
		
		
		\bibitem{Stein}
		E. Stein.
		Singular Integrals and Differentiability Properties of Functions.
		\textit{Princeton Univ. Press} 1970.
		
				\bibitem{Thieff}
				J.-L. Thieffault.
				Using multiscale norms to quantify mixing and transport.
				\textit{Nonlinearity,}
				84(3), R1-R44, 2012.
		
			\bibitem{Yao}
			Y. Yao and A. Zlato\v{s}.
			Mixing and Un-mixing by Incompressbile Flows. 
			\textit{J. Eur. Math. Soc. (JEMS).} To appear. arXiv:1407.4163.

		
		
		
	\end{thebibliography}
\end{document}